
\documentclass[a4paper,12pt]{amsart}
\usepackage{amsfonts}
\usepackage{mathrsfs}
\usepackage{amsmath,amssymb,latexsym,amsfonts,amscd}
\usepackage[small,nohug,UglyObsolete]{diagrams}
\diagramstyle[labelstyle=\scriptstyle]


\newcommand\dep{{\text{\rm dep}}}

\newcommand\cX{{\mathcal{X}}}

\newcommand\bC{{\mathbb C}}

\newcommand\bP{{\mathbb P}}
\newcommand\bQ{{\mathbb Q}}

\newcommand\bZ{{\mathbb Z}}

\newtheorem{thm}{Theorem}[section]

\newtheorem{cor}[thm]{Corollary}
\newtheorem{prop}[thm]{Proposition}

\theoremstyle{definition}
\newtheorem{defn}[thm]{Definition}

\newtheorem{rem}[thm]{Remark}
\theoremstyle{remark}

\title{Birational maps of $3$-folds}
\author{Jungkai Alfred Chen}

\address{\rm National Center for Theoretical Sciences (NCTS/TPE) and Department of Mathematics, National Taiwan University, Taipei,
106, Taiwan} \email{jkchen@ntu.edu.tw}


\thanks{The  author was partially supported by  NCTS/TPE
and  National Science Council of Taiwan.
  We are indebted to Cascini, Hacon, Hayakawa, Kawakita, Kawamata, Koll\'ar and  Mori for many useful
  discussion.
Some of this work was done during  visits of the  author to RIMS
and Imperial College London. The  author would like to thank both
institutes for their hospitality.}

\begin{document}
\begin{abstract}
We show that $3$-fold terminal flips and divisorial contractions
may be factored into a sequence of  flops, blow-downs to a smooth
curve in a smooth $3$-fold or divisorial contractions to  points
with minimal discrepancies.
\end{abstract}
\maketitle

\section{Introduction} In birational geometry, one of the main task is to find a good model inside a birational equivalence class and study the geometry of models. This goal can be achieved by minimal model program. The minimal model conjecture asserts that for any given nonsingular or mildly singular projective variety, there exists a minimal model or a Mori fiber space after a sequence of flips and divisorial contractions. Moreover, different minimal models are connected by a sequence of flops. Therefore divisorial contractions, flips and flops  are the elementary birational
maps of the minimal model program.

Together with some recent advances on geometry of $3$-folds, for
example, $m$-th canonical maps is birational for $m \ge 73$ and
the canonical volume $\ge \frac{1}{2660}$ (cf. \cite{ExplicitI,
ExplicitII}), one might hope to build up an explicit
classification theory for $3$-folds similar to the theory of
surfaces by using the minimal model program explicitly. To this
end,  it is thus natural to ask how explicit do we know about
birational maps in three-dimensional  minimal model program. Even
though the minimal model program for $3$-folds was "proved" in
more than twenty years ago by Mori and others, the more detailed
and explicit description of birational maps in $3$-dimensional
minimal model program was available only quite recently and not
completely satisfactory. To give a quick tour of known results:
Mori and then Cutkosky classified birational maps from a
nonsingular and Gorenstein $3$-fold respectively \cite{Mo82, Cu},
and Tziolas has a   series of works on divisorial contractions to
curves passing through Gorenstein singularities (cf. \cite{Tz03,
Tz05, Tz09}). Divisorial contractions to points are probably most
well-understood mainly thanks to the work of Kawamata, Hayakawa,
Markushevich and Kawakita (cf. \cite{Ka, HaI, HaII, Ha1, Ma, Kk01,
Kk02, GE, Kk05,  Kk11}). Also, the structure of flops are studied
in Koll\'ar's article \cite{Ko89}. Flips are still quite
mysterious except for some examples in \cite{KM92, Br} and toric
flips \cite{Re}.

Instead of classifying birational maps completely,
we  work on the problem to factorize birational maps into a composition of simplest ones.
Such factorization can be very useful for comparing various invariants between birational models.
It is also useful in classifying birational maps. In the previous joint work with Christopher Hacon \cite{CH},
we are able to factorize flips and divisorial contractions to curves.
Our previous work \cite{Pisa} factorizes divisorial contractions to a point of index $r>1$ with non-minimal discrepancy $\frac{a}{r}> \frac{1}{r}$.
The purpose of this note is to
show that one can factor threefold birational maps in minimal
model program into some simple and explicit ones by combing previous work \cite{CH, Pisa} and considering divisorial contraction to a point of index $r=1$.

\begin{defn}
A birational map $f: X \dashrightarrow Y$ is {\it factorizable} if
it admits a factorization into a sequence of birational maps:
$$  X=X_0 \dashrightarrow X_1 \dashrightarrow \ldots
\dashrightarrow X_n=Y,$$ such that each map $X_{i-1}
\dashrightarrow X_{i}$ is one of the following
\begin{enumerate}
\item a divisorial contraction (or its inverse) to a point $P_i
\in X_i$ of index $r_i \ge 1$ with minimal discrepancy;

\item a blowup along a smooth curve in a smooth neighborhood;

\item a flop.
\end{enumerate}
\end{defn}

\begin{thm}[=Main Theorem]\label{main}
A three dimensional divisorial contraction $f: X \to W$ (resp.
flip $\phi: X \dashrightarrow X^+$) is factorizable.
\end{thm}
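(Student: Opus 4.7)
The plan is to combine the previously established cases with an inductive argument on the single remaining case. By \cite{CH}, $3$-fold flips and divisorial contractions to curves are factorizable, and by \cite{Pisa}, divisorial contractions to a point of index $r>1$ with non-minimal discrepancy are factorizable. Divisorial contractions of any index with minimal discrepancy are factorizable by definition. So the theorem reduces to showing that a divisorial contraction $f:X\to W$ to a Gorenstein point $P\in W$ (so $r=1$) with non-minimal discrepancy $a$ is factorizable.

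For this remaining case I would induct on the discrepancy $a$. Using the classification of Gorenstein divisorial contractions due to Mori, Cutkosky, Tziolas and Kawakita, together with general extraction results, the goal is to construct an auxiliary extraction $g:Y\to W$ of a distinct divisor $F$ over $P$ whose discrepancy $a'$ is strictly smaller than $a$ (ideally minimal). Because $f$ and $g$ extract different divisors and are isomorphisms away from $P$, the induced birational map $\phi:X\dashrightarrow Y$ is an isomorphism in codimension one over $W$. A relative MMP (or a two-ray game) over $W$ then factors $\phi$ into a sequence of $3$-fold flops and flips, each flip of which is factorizable by \cite{CH}. This reduces the factorizability of $f$ to that of $g$, which is covered by the induction hypothesis on $a'<a$, closing the induction at the base case of minimal discrepancy.

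The main obstacle is producing the auxiliary extraction $g$ uniformly across every type in the classification, and controlling the intermediate varieties that occur in the MMP factoring $\phi$. The case where $P$ is a smooth point of $W$ is particularly delicate because the minimal discrepancy is $2$ and the minimal extraction (the ordinary blowup of $P$, with $E\simeq\bP^2$) is rigid; here $g$ must be arranged via a suitable weighted blowup, and the MMP may need to pass through intermediate flops before reaching $Y$. Similarly, when $X$ itself carries nontrivial terminal singularities one must check that every flip produced along the way lies within the scope of \cite{CH}. I expect these verifications, as in \cite{Pisa}, to reduce to computations using the Hayakawa--Kawakita classification of divisorial contractions and Koll\'ar's description of three-dimensional flops.
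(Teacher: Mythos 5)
Your reduction to the case of a divisorial contraction to a Gorenstein point with non-minimal discrepancy, and the idea of descending by induction on the discrepancy, match the paper's strategy. But the mechanism you propose for the descent contains a genuine error. You take a second extraction $g:Y\to W$ of a \emph{different} divisor $F$ over $P$ and claim that the induced map $\phi:X\dashrightarrow Y$ is an isomorphism in codimension one over $W$, hence a composite of flips and flops. This is false: since $Y\to W$ extracts only $F$, the divisor $E\subset X$ has center of codimension at least two on $Y$, so $\phi$ contracts $E$ and cannot be factored into flips and flops. The correct configuration (which is what the paper and \cite{Pisa} use) is a link through a variety that dominates $X$: one chooses a non-Gorenstein point $Q\in X$ (which must exist by \cite{Mo82,Cu} since $a>1$), takes a minimal-discrepancy extraction $g:Y\to X$ over $Q$, so that $\rho(Y/W)=2$, verifies that $-K_{Y/W}$ is nef (Proposition~\ref{nef}), and then the two-ray game yields $Y\dashrightarrow Y^\sharp\to X^\sharp\to W$, where $Y^\sharp\to X^\sharp$ contracts $E_{Y^\sharp}$ and $X^\sharp\to W$ contracts $F_{X^\sharp}$ with strictly smaller discrepancy. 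In particular the link unavoidably produces a \emph{new divisorial contraction} $g^\sharp:Y^\sharp\to X^\sharp$ in addition to $f^\sharp$, and your induction scheme has no mechanism to handle it; the paper does so by a second induction on depth, using $\dep(Y^\sharp)\le\dep(Y)=\dep(X)-1$. Verifying nefness of $-K_{Y/W}$ and the sign conditions of Corollaries~\ref{2ray2}--\ref{2ray3} is exactly the case-by-case content of Section~4 (Cases Ia--IIg), and it is not uniform: for the types IIb--IIg the contraction is not even known to be a weighted blowup, and one must locate the unique discrepancy-one valuation via an economic resolution before the game can be played.

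A secondary gap: you cite \cite{CH} as already giving factorizability of divisorial contractions to curves, but in the Gorenstein base case $f$ is the blowup of an lci curve $\Gamma$ that may be \emph{singular}, while Definition~1.1 only admits blowups along smooth curves in smooth neighborhoods. Handling this requires the additional link of Proposition~\ref{lci}, which trades the singular points of $\Gamma$ for ordinary point blowups and flops; your proposal omits this step entirely.
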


\begin{rem}
Given a divisorial contraction to a point $f: X \to W \ni P$ with
exceptional divisor $E$. Then we can write $K_X = f^*K_W + aE$. We
say that the contraction $f$ has discrepancy $a$.

Given   $P$  a terminal singularity of index $r$, then the minimal
discrepancy among all divisorial contractions to $P$ is
$\frac{1}{r}$ by \cite{Ma} and \cite{Ka}. If $P \in W$ is a
nonsingular point, then the minimal discrepancy among all
contractions to $P$ is $2$ by \cite{Kk01}.
\end{rem}

The key observation is that for any   complicated divisorial
contraction $X \to W$ (resp. flip $X \dashrightarrow X^+$), there
exists  singular points of index $r>1$ on $X$. By choosing  $Q \in
X$ a point of higher index and choosing a divisorial contraction
$Y \to X$ to the point $Q \in X$ with discrepancy $\frac{1}{r}$,
we shall prove that there exists a diagram of birational maps:

\begin{equation*}
\begin{diagram}
Y     &   & \rDashto &    &  Y^\sharp   \\
\dTo^{g} &       &      &   &  \dTo_{g^\sharp}      \\
 X       &        &      &  &  X^\sharp  \\
           & \rdTo_{f}  &      & \ldTo_{f^\sharp}  &        \\
           &        &  W   &        &
\end{diagram}\eqno{\ddagger}
\end{equation*}
where $Y \dashrightarrow Y^\sharp$ consists of a sequence of flips
and flops, $g^\sharp$ is a divisorial contraction, and $f^\sharp$
is also a divisorial contraction (resp. $f^\sharp $ is the flipped
map). We thus call that  $\ddagger$ is a {\it factoring diagram
for $X \to W$ (resp. $X \dashrightarrow X^+$)}.

If $f$ is a weighted blowup, then the factoring diagram can be
constructed by using toric geometry and a few computation. This
was the approach in \cite{Pisa}. In the remaining divisorial
contractions which are not known to be weighted blowups, usually
there is a unique non-Gorenstein singularity $P \in X$ of pretty
high index. By choosing a divisorial contraction $g: Y \to X$ with
minimal discrepancy, one can verify that there is only a little
change in the intersections. Computation shows that $-K_{Y/W}$ is
nef and one can thus play the so-called $2$-ray game to obtain the
factoring diagram.

Moreover, by considering depth (cf. \cite{CH}) and discrepancy, one sees that  $Y, Y^\sharp, X^\sharp$ has milder singularities in some sense. Our result then follows by induction using the factoring diagram.

\section{notations and preliminary}

We always work on complex threefolds with  $\bQ$-factorial
 singularities (unless the image of flipping contraction).
Recall that threefold terminal singularities of index $1$ are
isolated $cDV$ points and terminal singularities of index $r>1$
are classified by Mori (cf. \cite{Mo85}).

This work can be considered as a continuation of our previous work
\cite{CH, Pisa}. We usually adapt the constructions and notations
there.

Given a threefold terminal singularity $P \in X$ of index $r>1$,
by \cite{HaI, HaII}, there exists a partial resolution $$ X_n \to
\ldots \to X_1 \to X_0=X \eqno{\dagger}$$ such that $X_n$ has
Gorenstein singularities and each $X_{i+1} \to X_{i}$ is a
divisorial contraction to a point $P_i \in X_i$ of index $r_i
>1$ with discrepancy $\frac{1}{r_i}$.  The definition of depth was  introduced in
\cite{CH}.
$$ \dep(P \in X):=\min\{n| X_n \to X \ni P \text{ is a partial
resolution as above}\}.$$

The following properties for depth are useful.
\begin{prop} \label{depth}
The following properties for depth holds.
\begin{enumerate}
\item
Let $\phi: X \dashrightarrow X^+$ be a flip (resp. flop), then $\dep(X) > \dep(X^+)$ (resp. $\dep(X)=\dep(X^+)$).

\item
Let $f: X \to W$ be a divisorial contraction to a curve, then $\dep(X) \ge \dep(W)$. Equality holds if and only if $\dep(X)=\dep(W)=0$.

\item Let $f: X \to W$ be a divisorial contraction to a point, then $\dep(X)+1 \ge \dep(W)$.
\end{enumerate}
\end{prop}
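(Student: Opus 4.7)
The overall strategy is to track how each type of birational map affects the non-Gorenstein terminal points of the threefold, exploiting the fact that depth is essentially additive over such points and that each step of Hayakawa's partial resolution $\dagger$ reduces local complexity at exactly one point. The main tools are Hayakawa's construction itself, the classification of terminal flips and flops due to Mori and Koll\'ar--Mori, and the classification of divisorial contractions to a point by Kawamata, Markushevich, and Kawakita.

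For (1), the flop case follows from Koll\'ar's explicit description of a threefold terminal flop: $X$ and $X^+$ have analytically identical singular points at corresponding positions on the flopping and flopped curves, so Hayakawa's partial resolution transports between them and the lengths coincide. For flips, the Koll\'ar--Mori classification of extremal neighborhoods shows that each flip strictly improves the singularities along the flipping curve (for example, the total invariant $\sum (r_i - 1)$ taken over the non-Gorenstein points of the flipping/flipped loci strictly drops), which via Hayakawa's construction translates to $\dep(X) > \dep(X^+)$.

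For (2), let $f: X \to W$ contract a divisor $E$ to a curve $C$. Using Tziolas's classification of divisorial contractions to a curve through non-Gorenstein points, each non-Gorenstein $Q \in C \cap \mathrm{Sing}(W)$ lifts to at least one non-Gorenstein point on $E$ whose local Hayakawa depth dominates that of $Q \in W$. Summing over all such $Q$ gives $\dep(X) \ge \dep(W)$, and equality forces $f$ to preserve the analytic type at every non-Gorenstein point of $W$ on $C$; running through Tziolas's list, this is only possible when $C$ avoids all non-Gorenstein points of $W$, which together with $f$ not creating new ones forces $\dep(X) = \dep(W) = 0$.

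For (3), pick any minimal-discrepancy divisorial contraction $\widetilde{f}: \widetilde{X} \to W$ at $P$ (existence by Hayakawa) and append a minimal Hayakawa partial resolution of $\widetilde{X}$ of length $\dep(\widetilde{X})$; this exhibits $\dep(W) \le \dep(\widetilde{X}) + 1$. It remains to show $\dep(\widetilde{X}) \le \dep(X)$: if $f$ itself has minimal discrepancy we take $\widetilde{X} = X$ and are done, while if not, the factorization (or $2$-ray game) of \cite{Pisa} connects $X$ to $\widetilde{X}$ over $W$ by a chain of flips and flops, and applying (1) step-by-step along this chain yields $\dep(\widetilde{X}) \le \dep(X)$. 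This last step is the main obstacle, since it forces a global comparison between two distinct divisorial contractions to the same point and so depends on the MMP-over-$W$ techniques of \cite{Pisa}; by contrast, (1) and (2) are essentially local computations that follow directly from the classification.
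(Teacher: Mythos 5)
Your plan treats all three parts as local computations read off from classifications, but the parts that actually require proof do not reduce to such computations, and two of your key steps have genuine gaps. First, in part (1) you deduce $\dep(X)>\dep(X^+)$ from the drop of $\sum (r_i-1)$ over the non-Gorenstein points of the flipping and flipped loci. That invariant (essentially Shokurov's difficulty) is strictly coarser than depth: a single $cA/r$ point of large axial multiplicity contributes $r-1$ to your sum but can have arbitrarily large depth, so a drop in $\sum(r_i-1)$ does not ``translate'' into a drop in $\dep$. In \cite{CH} the inequality $\dep(X)>\dep(X^+)$ is one of the main theorems, proved by a simultaneous induction with the factorization of flips, not by inspecting the Koll\'ar--Mori list. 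Second, in part (2) you invoke ``Tziolas's classification of divisorial contractions to a curve through non-Gorenstein points''; no such classification is available --- Tziolas's papers concern curves through Gorenstein singularities, as the introduction of this paper itself notes --- and the asserted local monotonicity ``each non-Gorenstein $Q\in C$ lifts to a point of $E$ of at least the same local depth'' is exactly the kind of statement that needs an argument rather than a citation, and in any case would only give the weak inequality, not the strictness when $\dep(X)>0$.

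The paper's own proof is structured very differently: everything except that strictness is quoted from \cite[Propositions 2.15, 3.5, 3.6]{CH}, and the one remaining claim is proved by induction on $\dep(X)$ using the factoring diagram of \cite{CH} ($Y\to X$ a minimal-discrepancy contraction to a highest-index point with $\dep(Y)=\dep(X)-1$, then $Y\dashrightarrow Y^\sharp$ a sequence of flips and flops, then $Y^\sharp\to X^\sharp\to W$), the base case $\dep(X)=1$ being settled by Kawamata's theorem \cite{Ka}: if $\dep(W)=1$ then $W$ has a single point of type $\frac{1}{2}(1,1,1)$, so the only divisorial contraction over it is the weighted blowup with weights $\frac{1}{2}(1,1,1)$, which contracts a divisor to a point rather than to a curve. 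Finally, your part (3) compares $X$ with a minimal-discrepancy model $\widetilde X$ via the two-ray game of \cite{Pisa}; since the constructions of \cite{Pisa} and of the present paper take Proposition \ref{depth} as an input (depth is the induction variable for the Main Theorem), this step risks circularity, whereas \cite[Proposition 2.15]{CH} establishes (3) independently. Your flop case of (1), via Koll\'ar's analytic identification of the singularities of $X$ and $X^+$, is the one part that goes through as written.
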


\begin{proof}
All the statements were proved in \cite[Proposition 2.15, 3.5,
3.6]{CH} except the strict inequality for divisorial contractions
to curves when $\dep(X) >0$. Recall that by \cite{CH}, there is a
factoring diagram
\begin{diagram}
Y     &   & \rDashto &    &  Y^\sharp   \\
\dTo^{g} &       &      &   &  \dTo_{g^\sharp}      \\
 X       &        &      &  &  X^\sharp  \\
           & \rdTo_{f}  &      & \ldTo_{f^\sharp}  &        \\
           &        &  W   &        &
\end{diagram}
such that $Y \to X$ is a divisorial contraction to a higher index
point $Q \in X$  with $r(Q \in X) >1$ and  discrepancy
$\frac{1}{r}$, and $\dep(Y)=\dep(X)-1$. Moreover, $Y^\sharp \to
X^\sharp$ is a divisorial contraction to a curve and $X^\sharp \to
W$ is a divisorial contraction to a point.

If $\dep(X)=1$ and suppose that $\dep(W) >0$, then $\dep(W)=1$ for $\dep(X) \ge \dep(W)$ by
\cite[Proposition 3.6]{CH}. Then by definition of depth, it is
easy to see tat $W$ has only one quotient singularity of type
$\frac{1}{2}(1,1,1)$. It follows that $X \to W$ is the weighted
blowup with weights $v=\frac{1}{2}(1,1,1)$ by \cite{Ka}, which is
absurd.
We thus conclude that $\dep(W)=0 < \dep(X)$.

In general $\dep(X)=d>1$, then $\dep(Y^\sharp) \le \dep(Y)=d-1$.
By induction hypothesis, one has $\dep(X^\sharp) <\dep(Y^\sharp)
\le d-1$. It follows that $\dep(W) \le \dep(X^\sharp)+1 <d$ by
\cite[Proposition 2.15]{CH}.
\end{proof}

\section{divisorial contractions to curves}
The purpose of this section is to factorize threefold divisorial contraction to curves. Let $f: X \to W$  be a divisorial contraction to a curve $\Gamma \subset W$ such that $X$ has at worst terminal Gorenstein singularities. By \cite{Mo82, Cu}, it is known that $W$ is smooth near $\Gamma$ and $\Gamma \subset W$ is a lci curve. Moreover, $f$ is the blowup along $\Gamma$.

If $\Gamma$ is a nonsingular curve, then $f: X \to W$ is nothing
but the blowups along $\Gamma$. If the curve $\Gamma$ is singular
at $o$, then one can factorize the divisorial contraction $f: X
\to W$ by the following diagram.

\begin{prop} \label{lci} Keep the notation as above.   Then there is a factoring diagram as $\ddagger$ of birational maps
such that
\begin{enumerate}
\item $Y \dashrightarrow Y^\sharp$ consists of a sequence of flops;

    \item  $f^\sharp$ is the blowup along $o \in W$;
     \item $g^\sharp$ is the blowup of $X^\sharp$ along $\Gamma^\sharp$, where $\Gamma^\sharp$ is  the proper transform of $\Gamma$ in $X^\sharp$;
    \item the induced map $\Gamma^\sharp \to \Gamma$ is isomorphic to the blowup of $\Gamma$ over $o$;
    \item $g$ is a divisorial contraction to a singular point $Q \in X$ of type $cA$ with discrepancy $1$.
\end{enumerate}
\end{prop}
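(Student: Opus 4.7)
The plan is to construct the right-hand side of the diagram directly from $W$, and then identify $X$ on the left via a $2$-ray game.

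First I would let $f^\sharp \colon X^\sharp \to W$ be the ordinary blowup of the smooth point $o \in W$, so that $X^\sharp$ is smooth with exceptional divisor $E^\sharp \cong \bP^2$, giving (2). Let $\Gamma^\sharp \subset X^\sharp$ be the proper transform of $\Gamma$; by the standard universal property of blowups, $\Gamma^\sharp \to \Gamma$ is the blowup of $\Gamma$ at $o$, which is (4). The first nontrivial point is to check that $\Gamma^\sharp$ still satisfies the Mori--Cutkosky hypotheses, i.e.\ it is an lci curve in a smooth threefold whose blowup is terminal Gorenstein. One does this by going through the finite list of planar singularity types of $(o \in \Gamma \subset W)$ from \cite{Mo82, Cu} and verifying, case by case in local coordinates, that $\Gamma^\sharp$ is lci with a strictly simpler singularity than $\Gamma$ at $o$. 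One then defines $g^\sharp \colon Y^\sharp \to X^\sharp$ as the blowup along $\Gamma^\sharp$, which is the desired terminal Gorenstein divisorial contraction to a curve, giving (3).

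Next I would locate $Q \in X$ and the contraction $g$. Because $\Gamma$ is singular at $o$, an explicit local computation of $X = \mathrm{Proj}_W \bigoplus_{n \ge 0} I_\Gamma^n$ in a suitable affine chart over $o$ shows that the fiber $f^{-1}(o) \subset X$ carries exactly one singular point $Q$, which has the hypersurface form $xy - h(z,t) = 0$ locally, hence is of type $cA$. The existence of a divisorial contraction $g \colon Y \to X$ centered at such a $cDV$ point with discrepancy $1$ is a standard consequence of the classification of divisorial contractions to Gorenstein terminal points (cf.\ \cite{Kk05}), which gives (5).

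Finally, for (1): both $Y \to W$ and $Y^\sharp \to W$ have relative Picard number two and contract precisely the two divisors lying over $\Gamma \subset W$ and over $o \in W$, so the induced birational map $Y \dashrightarrow Y^\sharp$ is an isomorphism in codimension one. A direct discrepancy count over $W$ pairs up the divisors (the exceptional $F$ of $g$ with the strict transform of $E^\sharp$ on $Y^\sharp$, and the strict transform of $E$ on $Y$ with the exceptional $F^\sharp$ of $g^\sharp$) and shows that $K_{Y/W}$ agrees with $K_{Y^\sharp/W}$ under this identification. Hence $Y \dashrightarrow Y^\sharp$ is $K$-trivial; running the $2$-ray game on $Y^\sharp$ over $W$, each small extremal contraction encountered has $K$-degree zero and is therefore a flop, and the process terminates by Koll\'ar's results on Gorenstein flops \cite{Ko89}. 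The terminal model must be $Y$ by Mori--Cutkosky uniqueness of divisorial contractions to $\Gamma \subset W$. The main obstacle throughout is the case analysis in the first paragraph: one must check by hand, using the list of planar curve singularity types, that blowing up $o$ strictly simplifies the singularity of $\Gamma$ and keeps $\Gamma^\sharp \subset X^\sharp$ inside the Mori--Cutkosky framework. Once that local model is pinned down, identifying $Q$ as $cA$, building $g$ with discrepancy $1$, and recognizing the intermediate small contractions as flops are all routine consequences of the classification of terminal Gorenstein singularities.
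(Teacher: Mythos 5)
Your architecture for the diagram (ordinary blowup of $o$ on the right, blowup of $\Gamma^\sharp$ above it, a discrepancy-$1$ contraction of the $cA$ point $Q$ on the left) matches the paper's, but the two steps that actually carry the proof are asserted rather than established, and as written they fail. First, you never pin down $g$. Over the $cA$ point $Q=(y_3y_4=h(y_1,y_2))$ with $\tau=\mathrm{mult}_o h$, there are several divisorial contractions of discrepancy $1$ --- for instance the weighted blowups with weights $(1,1,\tau-1,1)$ and $(1,1,1,\tau-1)$ --- and these define \emph{different} valuations over $W$ with different discrepancies there ($\tau$ and $2$ respectively, by the adjunction count $\sum w_i-1-\mathrm{wt}(\varphi)$ computed over $W$). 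Only for the latter does the exceptional valuation of $g$ coincide with that of the ordinary blowup of $o\in W$; for the former one gets $K_{Y/W}=E_Y+\tau F$ against $K_{Y^\sharp/W}=2E^\sharp_{Y^\sharp}+F^\sharp$, so $Y\dashrightarrow Y^\sharp$ is not even an isomorphism in codimension one once $\tau\ge 3$. Your ``direct discrepancy count'' is precisely the computation that selects the correct $g$, and your Picard-number argument (``both contract one divisor over $\Gamma$ and one over $o$, hence isomorphic in codimension one'') is invalid, since there are infinitely many distinct divisorial valuations centered at $o$. The paper forces the valuations to agree by construction: it embeds $W=(x_4-h(x_1,x_2)=0)\subset\bC^4$ and realizes both towers as the two orders of the same pair of weighted blowups of $\bC^4$, then invokes the uniqueness statement of \cite[Theorem 2.7]{Pisa} to identify the output of the $2$-ray game with the reversed tower.

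Second, your identification of the connecting map as a composition of flops is not justified. Matching discrepancies over $W$ does not by itself make every step of the $2$-ray game a $K$-trivial small contraction, and Koll\'ar's results on Gorenstein flops do not apply: $Y$ carries a quotient singularity of index $\tau-1>1$ when $\tau\ge 3$. The paper proceeds differently: it first obtains a factoring diagram in which $Y\dashrightarrow Y^\sharp$ is a priori a sequence of flips \emph{and} flops (using the computation $l\cdot K_X=-1$, $l_Y\cdot K_Y=0$ for the proper transform of $l=f^{-1}(o)$ together with the machinery of \cite[Theorem 3.3]{CH}), and only then rules out flips by computing $\dep(Y)=\dep(Y^\sharp)=\tau-1$ and appealing to Proposition \ref{depth}, which says flips strictly decrease depth while flops preserve it. You need some such invariant, or an explicit control of every curve of non-positive $K_Y$-degree, to obtain conclusion (1). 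By contrast, the case-by-case analysis of planar singularity types that you flag as the main obstacle is unnecessary: the single local model $\Gamma=(x_3=h(x_1,x_2)=0)$ with the parameter $\tau=\mathrm{mult}_o h$ handles all cases uniformly.
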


\begin{proof}
 Recall that a weighted blowup for a toric variety can be obtained by subdivision along a primitive vector $v$ and the exceptional divisor is the divisor corresponding to the vector $v$. Also a weighted blowup for a complete intersection in a toric variety is considered to be  the induced map from its proper transform.  For detailed description, please see \cite{Pisa} for example.

 By shrinking $W$, we
may assume that $X$ is an open subset in $\bC^3$,
$\Gamma=(x_3=h(x_1,x_2)=0) \subset W \subset \bC^3$ and $o \in
\Gamma$ is the only singular point of $\Gamma$. Let
$\tau:=\text{mult}_o h(x_1,x_2) \ge 2$.

We consider towers of weighted blowups $\cX_2
\stackrel{\pi_g}{\to} \cX_1 \stackrel{\pi_f}{\to} \cX_0$, where
$\cX_0 = \bC^4$, $\pi_g$ (resp $\pi_f$) are weighted blowup along
the vector  $v_2=(1,1,\tau-1,\tau)$ (resp $v_1=(0,0,1,1)$). More
explicitly,  $\pi_f$ is the blowup of $\cX_0$ along
$\Sigma:=(x_3=x_4=0)$ and $\cX_1$ is covered by two affine pieces
$U_3 \cup U_4$. One sees also that $\pi_g$ is the weighted blowup
over the origin of $U_3$ with  weights $(1,1,\tau-1,1)$.

%
We may consider an embedding $W \hookrightarrow \bC^4$ that
$W=(x_4-h(x_1,x_2)=0)$. Now $\Gamma=W \cap \Sigma$ and the given
divisorial contraction $f: X \to W$ coincides with the induced map
${\pi_f}_{|_X}$. On $X$, there is a unique singularity $Q_3$ of
$cA$ type locally given by $x_3x_4-h(x_1x_2)=0$. Moreover, let $Y$
be the proper transform of $X$ in $\cX_2$. The induced map $g: Y
\to X$, which is the weighted blowup with weights $(1,1,\tau-1,1)$
over $Q_3$, is clearly a divisorial contraction to $Q_3$ with
discrepancy $1$.

Let $l:=f^{-1}(o) \cong \bP^1$, $l_Y$ be the proper transform of
$l$ in $Y$. It is easy to see that $l \cdot K_X=-1$ and $l_Y \cdot
K_Y= 0$. We remark that there is only one singularity on $Y$, which is a quotient singularity of index $\tau-1$ and does not contained in $l_Y$. By the same argument in
\cite[Theorem 3.3]{CH}, one has a factoring diagram as $\ddagger$
and a tower of divisorial contractions $Y^\sharp \to X^\sharp \to
W$.



On the other hand, we may consider $Y' \to X' \to W$ by weighted
blowup with vector $v_2=(1,1,\tau-1,\tau)$ and then $v_1=(0,0,1,1)$. By
the same argument as in \cite[Theorem 2.7]{Pisa}, the tower $Y'
\to X' \to W$ is isomorphic to $Y^\sharp \to X^\sharp \to W$.

Let $\Gamma'$ be the proper transform of $\Gamma$ in $X'$.
Computation shows that
both $X' \to W$ and $\Gamma' \to \Gamma$ are isomorphic to the
blowup over $o \in W$ and $o \in \Gamma$. Moreover, $Y' \to X'$ is
the blowup along $\Gamma'$.
Since the only singularity on $X'$ is a quotient singularity $Q'_3$ of index $\tau-1$ and $\Gamma'$ does not contains $Q'_3$. Therefore $\dep(X')=\dep(Y')=\tau-1=\dep(Y)$. It follows that $Y \dashrightarrow Y'$ consists of a sequence of flops only by Proposition \ref{depth}.
This completes the proof.
\end{proof}

By the above diagram  successive over the  singular points of $\Gamma$, one get the following consequence immediately.

\begin{cor}
Let  $f: X \to W$  be a divisorial contraction to a curve $\Gamma \subset W$ such that $X$ has at worst terminal Gorenstein singularities. The $f: X \to W$ is factorizable.
\end{cor}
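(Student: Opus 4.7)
I plan to induct on a measure of the complexity of the curve $\Gamma \subset W$, for instance the drop $p_a(\Gamma) - g(\tilde{\Gamma})$ in arithmetic genus, equivalently the number of point blowups needed for an embedded resolution of $\Gamma$. The base case is $\Gamma$ smooth: by the Mori--Cutkosky classification quoted at the start of the section, $W$ is smooth near $\Gamma$ and $f$ is the blowup of $W$ along $\Gamma$, which is precisely type~(2) in the definition of factorizability, and nothing further is needed.

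For the inductive step, pick a singular point $o \in \Gamma$; after shrinking $W$ around $o$ we may assume $o$ is the only singularity of $\Gamma$, and Proposition~\ref{lci} supplies the factoring diagram $\ddagger$. I then read off that each piece is admissible. The divisorial contraction $g \colon Y \to X$ goes to a $cA$ (hence index $r=1$, Gorenstein) point with discrepancy $1$, and by the Remark this is the minimal discrepancy at an index-$1$ terminal point, so $g$ is of type~(1). The birational map $Y \dashrightarrow Y^\sharp$ is a sequence of flops, type~(3). The map $f^\sharp \colon X^\sharp \to W$ is the blowup of the smooth point $o \in W$, whose discrepancy $2$ is minimal at a smooth point by the Remark, so $f^\sharp$ is type~(1). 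Finally $g^\sharp \colon Y^\sharp \to X^\sharp$ is a divisorial contraction to the curve $\Gamma^\sharp$; the only non-Gorenstein point $Q'_3$ of $X^\sharp$ sits off $\Gamma^\sharp$, so $X^\sharp$ is smooth in a neighbourhood of $\Gamma^\sharp$ and $Y^\sharp$ is Gorenstein near the exceptional locus of $g^\sharp$ --- the hypothesis of the corollary is therefore again met locally around this exceptional locus.

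Since $\Gamma^\sharp \to \Gamma$ is the blowup of $\Gamma$ at $o$, the chosen complexity measure of $\Gamma^\sharp$ is strictly smaller than that of $\Gamma$, and the induction hypothesis furnishes a factorization of $g^\sharp$. Splicing this factorization with $g$, with the flops $Y \dashrightarrow Y^\sharp$, and with $f^\sharp$ yields the required factorization of $f$.

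The main obstacle, which is really bookkeeping rather than technical, is to confirm that the induction terminates and that the proposition remains applicable at every stage. Termination is the classical statement that any embedded curve singularity is resolved by finitely many point blowups --- for example, the $\delta$-invariant of the curve strictly drops after the blowup of a singular point. Applicability of Proposition~\ref{lci} at each stage is guaranteed by localising near a singular point of $\Gamma^\sharp$: although $Y^\sharp$ and $X^\sharp$ may carry quotient singularities inherited from earlier stages of the iteration, these always sit away from the proper transform of the curve, so the local Mori--Cutkosky picture that powers Proposition~\ref{lci} is preserved and the same factoring recipe can be re-run.
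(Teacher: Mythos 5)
Your argument is correct and is essentially the paper's own proof: the paper disposes of the corollary in one sentence by applying the factoring diagram of Proposition~\ref{lci} successively over the singular points of $\Gamma$, which is exactly your induction on the number of point blowups needed to resolve $\Gamma$ (equivalently the $\delta$-invariant), with the base case of a smooth curve handled by Mori--Cutkosky. Your additional checks --- that $g$ and $f^\sharp$ have minimal discrepancy at their respective centres, and that the quotient singularity of $X^\sharp$ lies off $\Gamma^\sharp$ so the hypotheses persist locally at the next stage --- are the details the paper leaves implicit, and they are verified correctly.
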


\section{divisorial contractions to points}

Divisorial contractions to points was intensively studied by
Kawamata, Hayakawa, and Kawakita \cite{Ka, HaI, HaII, Kk01, Kk02,
GE, Kk05, Kk11}. We give a brief summary of  the known
classification.
\begin{itemize}
\item If $f: X \to W \ni P$ is a divisorial contraction to a point $P \in W$ of index $r>1$ with discrepancy $\frac{a}{r} \ge \frac{1}{r}$, then $f$ is completely classified. Any of these can be realized as a weighted blowup explicitly (cf. \cite{Ka, HaI, HaII, Kk05, Kk11}).

\item If $f: X \to W \ni P$ is a divisorial contraction to a point $P \in W$ of index $r=1$ with discrepancy $1$.

\item If $f: X \to W \ni P$ is a divisorial contraction to a point $P \in W$ of index $r=1$ with discrepancy $a>1$, then $f$ is one of following cases in Table A.
\end{itemize}

\centerline{\textbf{Table A.}} \smallskip
\begin{tabular}{|l|l|l|c|l|}
\hline
type & $P \in W$ & discrepancy & w. blowup & reference \\
 \hline
 Ia & nonsingular & $a+b$ & Yes &  \cite[Theorem 1.1]{Kk01} \\
 \hline
 Ib & $cA$ & $a \ge 1$ & Yes & \cite[Theorem 1.2.i]{Kk05} \\
  \hline
 Ic & $cD$ & $a>1$,  odd & Yes & \cite[Theorem 1.2.ii.a]{Kk05} \\
  \hline
  Id & $cD$ & $a>1$ & Yes & \cite[Theorem 1.2.ii.b]{Kk05} \\
  \hline
 IIa & $cA_1$ & $4$ & Yes & \cite[Theorem 2.5]{Kk02} \\
  \hline

IIb & $cE_{7,8}$ & $2$ & ? & \cite[Table 3, e9]{Kk05} \\
  \hline
IIc & $cE_{7}$ & $2$ & ? & \cite[Table 3, e5]{Kk05} \\
  \hline
IId & $cA_2, cD, cE_6$ & $3$ & ? & \cite[Table 3, e3]{Kk05} \\
  \hline
IIe & $cD, cE_{6,7}$ & $2$ & ? & \cite[Table 3, e2]{Kk05} \\
  \hline
IIf & $cD$ & $2$ & ? & \cite[Table 3, e1]{Kk05} \\
  \hline
IIg & $cD$ & $4$ & ? & \cite[Table 3, e1]{Kk05} \\
  \hline

\end{tabular}

\bigskip

The purpose of this section is to construct a factoring diagram
$\ddagger$ for divisorial contraction  with non-minimal
discrepancy $a>1$ as listed in Table A. Given  a divisorial
contraction with non-minimal discrepancy $f: X \to W \ni P$. Let
$E$ be its exceptional divisor. By the classification of
\cite{Mo82},\cite{Cu}, $X$ can not be Gorenstein. We will pick a
point $Q \in X$ of index $p >1$.

For  any divisor $D$ on $X$ passing through $Q$, we set
$D_{W}=f^{}_*D$, $D_{Y}=g^{-1}_*D$ to be the proper transform  of
$D$ on $W,Y$ respectively. Let  $E_Y$ denotes  the proper
transform of $E$ on $Y$. We have
$$f^*D_{W}=D+ \frac{c_0}{n} E, \quad g^*D =
D_{Y}+\frac{q_0}{p} F, \quad g^*E=E_Y+\frac{\frak q}{p}F$$ for some $c_0, q_0,  \frak q \in \bZ_{>0}$.

\begin{prop}{\cite[Proposition 2.4]{Pisa}} \label{nef}  Let  $f: X \to W$ be a divisorial contraction to a point $P \in
W$ of index $n$ with discrepancy $\frac{a}{n} $ and
$E$ the exceptional divisor of $f$. Let $g: Y \to X $ be a divisorial contraction to a point $Q \in E$
of index $p$ with discrepancy $\frac{b}{p} $. Suppose that there is a  divisor $D$ on $X$ such that $D \cap E$ is irreducible.
  Then $-K_{Y/W}$
is nef if  the following inequalities holds:
$$ \left\{
\begin{array}{l}
T(f,g,D):=\frac{-ac_0}{n^2}
E^3+\frac{q_0\frak q b}{p^3}F^3 \le  0;\\
bc_0-a q_0 \le 0.
\end{array}
\right. \eqno{\dagger}$$
\end{prop}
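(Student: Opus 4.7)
The plan is to verify that $-K_{Y/W}\cdot C\geq 0$ for every irreducible curve $C\subset Y$ contracted by $fg$; such $C$ lies in $(fg)^{-1}(P)=E_Y\cup F$. Write $\ell:=D\cap E$, irreducible by hypothesis, and $\ell_Y:=D_Y\cap E_Y$ for its proper transform.

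The computational heart of the argument is a short list of intersection identities. From $D=f^*D_W-\tfrac{c_0}{n}E$ and the standard vanishings $(f^*A)\cdot E^2=(f^*A)^2\cdot E=0$ (projection formula), one obtains $E\cdot\ell=-\tfrac{c_0}{n}E^3$. A parallel computation on $Y$ uses
\[
(fg)^*D_W=D_Y+\frac{c_0}{n}E_Y+\frac{c_0\frak q+nq_0}{np}F
\]
together with the vanishings $F^2\cdot g^*E=0$, $g^*D\cdot F\cdot E_Y=0$, and $g^*D\cdot\ell_Y=D\cdot\ell$ (all from the projection formula on $g$, since $F$ contracts to $Q$ while $\ell_Y$ does not contract). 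This yields $F\cdot\ell_Y=\tfrac{q_0\frak q}{p^2}F^3$.

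The conceptual step is then the numerical equivalence
\[
-K_{Y/W}\ \equiv\ \frac{a}{c_0}\,D_Y+\frac{aq_0-bc_0}{c_0\,p}\,F
\]
modulo pullbacks from $W$, which follows by combining $K_{Y/W}=\tfrac{a}{n}g^*E+\tfrac{b}{p}F$ with $g^*E=E_Y+\tfrac{\frak q}{p}F$ and the above expression for $D_Y$. The second hypothesis $bc_0-aq_0\leq 0$ is precisely the assertion that both coefficients here are nonnegative.

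Three cases close the argument. If $C\subset F$, then $g^*E\cdot C=0$ forces $E_Y\cdot C=-\tfrac{\frak q}{p}F\cdot C$, so $K_{Y/W}\cdot C=\tfrac{b}{p}F\cdot C\leq 0$ as $F$ is $g$-anti-nef. If $C\subset E_Y$ with $C\neq\ell_Y$, irreducibility of $\ell_Y=D_Y\cap E_Y$ forces $C\not\subset D_Y$, so $D_Y\cdot C\geq 0$; combined with $F\cdot C\geq 0$ (from $C\not\subset F$) and the displayed decomposition, this gives $-K_{Y/W}\cdot C\geq 0$. Finally, for $C=\ell_Y$, the identities produce $K_{Y/W}\cdot\ell_Y=-\tfrac{ac_0}{n^2}E^3+\tfrac{bq_0\frak q}{p^3}F^3=T(f,g,D)\leq 0$ by the first hypothesis.

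The main obstacle I expect is the initial block of intersection-number bookkeeping; once those identities are secured, the decomposition and the three-case check are essentially formal.
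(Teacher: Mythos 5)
Your argument is correct and is essentially the proof of this statement (the paper itself only quotes it from \cite[Proposition 2.4]{Pisa} without reproducing the proof): the same intersection identities $g^*E\cdot\ell_Y=-\tfrac{c_0}{n}E^3$ and $F\cdot\ell_Y=\tfrac{q_0\frak q}{p^2}F^3$, the same decomposition of $-K_{Y/W}$ into a nonnegative combination of $D_Y$ and $F$ modulo pullbacks from $W$, and the same three-case check on curves in $E_Y\cup F$. Two harmless points to keep in mind: ``nef'' here means nef over $W$ (so restricting to $fg$-contracted curves is the right reading), and $D\cdot E$ may be a multiple of the reduced curve $\ell$, which rescales your identities but not the signs.
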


In \cite[Theorem 1.5]{Kk11}, Kawakita give an affirmative answer to the General Elephant Conjecture. In particular, let $f: X \to W$ be a divisorial contraction, then a general element $S_X \in |-K_X|$ is normal and  has only Du Val singularities.

\begin{prop}{\cite[Proposition 2.5]{Pisa}} \label{2ray} Let $f: X \to W$ be a divisorial contraction to a point with exceptional divisors $E$ and $g: Y \to X$ be a divisorial contraction to a point $ Q \in E \subset X$  of index $p$ with discrepancy $\frac{1}{p}$. Let $F$ be the exceptional divisor of $g$.  Suppose that $-K_{Y/W}$ is nef and there is an irreducible curve $l \subset S_X \cap E$ such that $l_Y \cdot K_Y <0$, then we have the factoring diagram $\ddagger$ such that
\begin{enumerate}
 \item $\phi: Y \dashrightarrow Y^\sharp$ is a sequence of flips and flops (or just the identity map);
  \item $g^\sharp$ is a divisorial contraction contracting $E_{Z^\sharp}$;
  \item $f^\sharp$ is a divisorial contraction contracting $F_{Y^\sharp}$ to the point $P \in W$.
 \end{enumerate}
\end{prop}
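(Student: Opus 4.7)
The strategy is to play the two-ray game for $Y$ over $W$. Since $f$ and $g$ are both extremal divisorial contractions, $\rho(Y/X) = \rho(X/W) = 1$, so $\rho(Y/W) = 2$ and the relative Mori cone $\overline{NE}(Y/W)$ is a $2$-dimensional closed cone spanned by two extremal rays. The first, call it $R_g$, is generated by a fiber of $g$ over $Q$; because $g$ is a $K_X$-negative divisorial contraction we have $K_Y \cdot R_g < 0$, and the contraction of $R_g$ is just $g$ itself.

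Next, I would identify the second extremal ray $R'$ using the curve $l_Y$. Since $l \subset E \cap S_X$ is contracted by $f$ to $P$, the curve $l_Y$ lies over $W$; and since $l$ does not lie in $F = g^{-1}(Q)$, the class $[l_Y]$ is not proportional to $R_g$ in $N_1(Y/W)$. Hence $[l_Y]$ lies on the other extremal ray $R'$, and the hypothesis $K_Y \cdot l_Y < 0$ together with the nefness of $-K_{Y/W}$ forces $K_Y \cdot R' < 0$. By the Cone Theorem, $R'$ is contractible by a morphism $\pi: Y \to Z$ over $W$.

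I would then run the $K_Y$-MMP over $W$ starting from $R'$. No step can be a Mori-fiber-space contraction: the current model is always birational to $W$, so any intermediate contraction target $Z$ satisfies $\dim Z = 3$ (from surjectivity of $Z \to W$), incompatible with a fiber-type contraction. If a step is divisorial I stop; otherwise it is small and I perform the corresponding flip, or a flop if the contracted ray is $K_Y$-trivial (allowed because $-K_{Y/W}$ is only assumed nef, not ample). After each such move the relative Picard number over $W$ stays equal to $2$, and the same analysis applies to the new ``$R'$-side'' extremal ray on the new model. By termination of threefold flips, the sequence is finite and ends in a divisorial contraction $g^\sharp: Y^\sharp \to X^\sharp$; the composition $\phi: Y \dashrightarrow Y^\sharp$ is then a sequence of flips and flops, as claimed.

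Finally I would check that the exceptional divisors come out as stated. Throughout $\phi$, the divisor $F$ corresponding to the $R_g$-side is untouched by the $R'$-side MMP, so its proper transform $F_{Y^\sharp}$ persists as a divisor on $Y^\sharp$; dually, $E_{Y^\sharp}$ is swept out by the $R'$-side extremal ray at the final step and is therefore what $g^\sharp$ contracts. Because $\rho(X^\sharp/W) = 1$ and the image $g^\sharp_* F_{Y^\sharp}$ is a divisor of $X^\sharp$ mapping to the single point $P$ under $f \circ g$, the induced map $f^\sharp: X^\sharp \to W$ must be a divisorial contraction of $g^\sharp_* F_{Y^\sharp}$ to $P$, completing the factoring diagram $\ddagger$. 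The main obstacle I anticipate is the bookkeeping of this last step: one must verify carefully that $E_{Y^\sharp}$ (and not $F_{Y^\sharp}$) is what $g^\sharp$ contracts, which requires tracking $E_Y$ and $F$ across each flip or flop and confirming that neither ends up inside the flipping or flopping locus until the very final contraction.
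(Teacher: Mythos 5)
Your overall strategy is the right one: the paper does not reprove this statement but imports it from \cite[Proposition 2.5]{Pisa}, and the intended argument is exactly the $2$-ray game over $W$ that you set up ($\rho(Y/W)=2$, play the ray complementary to the $g$-ray, terminate at a divisorial contraction). However, there are genuine gaps at precisely the points where the hypotheses are actually used.

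First, the inference ``$[l_Y]$ is not proportional to $R_g$, hence lies on $R'$, hence $K_Y\cdot R'<0$'' is invalid: in a two-dimensional cone a class off one extremal ray need not lie on the other, and since $K_Y$ is already negative on $R_g$, the inequality $K_Y\cdot l_Y<0$ is perfectly compatible with $K_Y\cdot R'=0$. This matters because a $K_Y$-trivial extremal ray is not contractible by the Cone Theorem as you invoke it; you must perturb (contract with respect to $K_Y+\epsilon D$ for a divisor $D$ negative on the ray, using that $Y$ is terminal so the pair is klt for small $\epsilon$) and then appeal to existence of terminal $3$-fold flops for the flop step. Second, ``termination of threefold flips'' does not terminate your process, since the chain may contain flops; in this circle of papers termination is controlled by the depth function of \cite{CH} (flips strictly decrease it, flops preserve it) together with an argument excluding an infinite tail of flops, and you should also verify that after each step the new complementary ray is still $K$-nonpositive, which is not automatic from the nefness of $-K_{Y/W}$ on the first model. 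Third, and most importantly, the step you yourself flag as ``the main obstacle'' is exactly where the hypothesis on $l$ enters, and you leave it unresolved: since $S_X$ moves in $|-K_X|$, the curves $S_X\cap E$ sweep out $E$, so $l_Y\cdot K_Y<0$ exhibits $E_Y$ as covered by $K_Y$-negative curves contracted over $W$; such a divisor must be contracted by the relative $K$-MMP over $W$, and this is what forces the terminal divisorial contraction $g^\sharp$ to contract the transform of $E$ rather than that of $F$, leaving $f^\sharp$ to contract $F_{X^\sharp}$ to $P$. Without this argument the game could a priori end by contracting $F$ first, i.e.\ reproduce $Y\to X\to W$ and yield no factorization at all, so the conclusion of the proposition would fail in the stated form.
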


We will need the following variant. The proof is almost the same as {\cite[Corolary 2.6]{Pisa}}.

\begin{cor} \label{2ray2} Let $f: X \to W$ be a divisorial contraction to a point with exceptional divisors $E$ and $g: Y \to X$ be a divisorial contraction to a point $ Q \in E \subset X$  of index $p$ with discrepancy $\frac{1}{p}$. Let $F$ be the exceptional divisor of $g$.  Suppose that $l_Y \cdot K_Y \le 0$ for any irreducible curve $l \subset S_X \cap E$ and $T(f,g):=\frac{-a^2}{n^2}E^3+\frac{\frak{q}}{p^3}F^3<0$. Then we have a factoring diagram $\ddagger$ as in Proposition \ref{2ray}.
\end{cor}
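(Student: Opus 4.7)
The plan is to adapt the 2-ray game from Proposition \ref{2ray} essentially verbatim, trading the strict $K_Y$-negativity of a single distinguished curve for the strict inequality $T(f,g)<0$. Since $g$ is divisorial and $\rho(X/W)=1$, we have $\rho(Y/W)=2$, so $\overline{NE}(Y/W)$ has exactly two extremal rays. One is the ray $R_g$ of curves contracted by $g$; let $R$ be the other. The goal is to run the $K_Y$-MMP over $W$ along $R$, producing a sequence of flips and flops $Y\dashrightarrow Y^\sharp$, then a divisorial contraction $g^\sharp\colon Y^\sharp\to X^\sharp$ contracting the proper transform of $E$ in $Y^\sharp$, and finally a divisorial contraction $f^\sharp\colon X^\sharp\to W$ onto $P$, which together assemble into the factoring diagram $\ddagger$.

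The first step is to verify $K_Y\cdot R\le 0$, so that the MMP may proceed. By Kawakita \cite{Kk11} a general $S_X\in|-K_X|$ is normal with only Du Val singularities, and its proper transform $S_Y$ remains normal. Using $S_Y$ as a test divisor and the intersection data of $S_X\cap E$ with its transform, one shows that $R$ is numerically represented by proper transforms $l_Y$ of irreducible components $l\subset S_X\cap E$, so the hypothesis $l_Y\cdot K_Y\le 0$ translates directly into $K_Y\cdot R\le 0$. This part of the argument is identical to that of Proposition \ref{2ray}.

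The strict inequality $T(f,g)<0$ plays the role previously filled by a strictly $K_Y$-negative curve. Via the relations $f^*S_W=S_X+\tfrac{a}{n}E$, $g^*E=E_Y+\tfrac{\mathfrak q}{p}F$ and $K_Y=g^*K_X+\tfrac{1}{p}F$ (with $-K_X=S_X$), one rewrites $T(f,g)$ as an explicit intersection number on $Y$, essentially a positive multiple of an expression of the form $-K_Y\cdot E_Y\cdot F$ plus a controlled correction. Its strict negativity simultaneously (i) precludes the MMP from stalling at a $K_Y$-trivial configuration on the $R$-side, and (ii) forces the terminal step to be a divisorial contraction that contracts the proper transform of $E$, rather than producing a Mori fibre space or contracting a component of $F$. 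Once $g^\sharp$ is in hand, the image of $F$ on $X^\sharp$ is a divisor whose only image on $W$ is $P$, and it is extracted by a divisorial contraction $f^\sharp\colon X^\sharp\to W$, closing the diagram.

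The main obstacle will be the content of the third paragraph above: making the intersection-theoretic interpretation of $T(f,g)$ rigorous, and combining it with the non-strict inequalities $l_Y\cdot K_Y\le 0$ to extract the strict $K_Y$-negativity required both to launch the MMP along $R$ and to identify the correct divisorial exit. In Proposition \ref{2ray} the presence of a single strictly negative curve makes this identification immediate; under the weaker hypotheses here one must interlace the two ingredients carefully, but as the author indicates this is only a minor modification of the corresponding step in \cite{Pisa}.
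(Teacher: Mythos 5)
The paper offers no argument for this corollary beyond the remark that ``the proof is almost the same as \cite[Corollary 2.6]{Pisa}'', i.e.\ the relative $2$-ray game over $W$ that you outline, so your overall strategy is the intended one: the two hypotheses jointly replace ``$-K_{Y/W}$ nef plus one strictly $K_Y$-negative curve in $S_X\cap E$'' from Proposition \ref{2ray}. However, the step you flag as the crux (your third paragraph) misidentifies the key quantity, and as stated it would not close the argument. By the projection formula (all terms of the form $g^*A\cdot g^*B\cdot F$ and $g^*A\cdot F^2$ vanish, and likewise for $f$ and $E$), one has exactly
$$T(f,g)=\frac{-a^2}{n^2}E^3+\frac{\frak{q}}{p^3}F^3=-K_Y^2\cdot E_Y=K_Y\cdot S_Y\cdot E_Y,$$
the total $K_Y$-degree of the effective $1$-cycle $S_Y\cdot E_Y=\sum c_i\, l_{i,Y}$ (this is the same bookkeeping the paper uses in the proof of Corollary \ref{2ray3}, where $\sum (l_{i,Y}\cdot K_Y-l_i\cdot K_X)=\frac{\frak{q}}{p^3}F^3$ and $K_X\cdot S_X\cdot E=-\frac{a^2}{n^2}E^3$); it is not ``a positive multiple of $-K_Y\cdot E_Y\cdot F$ plus a correction''. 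With the correct identification the two hypotheses interlace immediately: every summand $c_i\, l_{i,Y}\cdot K_Y$ is $\le 0$ by assumption while the total is $<0$, so some irreducible $l\subset S_X\cap E$ has $l_Y\cdot K_Y<0$; together with the nefness of $-K_{Y/W}$ (which, as in your second paragraph, follows because the non-$g$ extremal ray is $E_Y$-negative, hence any $K_Y$-positive curve on it would lie in $S_Y\cap E_Y$ and be one of the $l_{i,Y}$), this places you exactly in the hypotheses of Proposition \ref{2ray}, and the rest is verbatim. One small caveat worth recording if you write this out: you should check that $S_Y\cdot E_Y$ has no component inside $F$ (or note that such components lie on the $g$-ray and handle them separately), since they would also contribute negatively to $T(f,g)$ and could otherwise absorb all the negativity.
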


An immediate but useful consequence is the following:

\begin{cor} \label{2ray3}
Keep the notation as in Corollary \ref{2ray2}. Suppose that $Q
\in E$ is the only non-Gorenstein point on $E$, which is of index
$p>1$. Suppose furthermore that $ \frac{\frak{q}}{p^3}F^3 <
\frac{1}{p}$. Then there exists a factoring diagram $\ddagger$ as in Proposition \ref{2ray}.
\end{cor}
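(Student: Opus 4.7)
The plan is to apply Corollary \ref{2ray2}, which has two hypotheses: $T(f,g)<0$, and $l_Y\cdot K_Y\le 0$ for every irreducible curve $l\subset S_X\cap E$. Both will be deduced from the assumption that $Q$ is the unique non-Gorenstein point on $E$.

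First I would bound $\frac{a^2}{n^2}E^3=(-K_X)^2\cdot E$ from below by $\frac{1}{p}$. Take a general $S\in|-K_X|$ and set $C:=S|_E$, a nonzero effective $1$-cycle on $E$ since $-K_X|_E$ is ample. By the assumption on $Q$, the Weil divisor $pK_X$ is Cartier on a neighbourhood of $E$, so for each irreducible component $l_i$ of $C$ the number $-pK_X\cdot l_i$ is a positive integer and hence $-K_X\cdot l_i\ge\frac{1}{p}$. Summing with positive multiplicities yields $\frac{a^2}{n^2}E^3=-K_X\cdot C\ge\frac{1}{p}$. Combined with the standing hypothesis $\frac{\frak{q}}{p^3}F^3<\frac{1}{p}$ this gives $T(f,g)<0$.

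Next I would apply Proposition \ref{nef} with $D=S_X$ the general elephant on $X$. The associated constants are $c_0=a$ (from $f^*S_W=S_X+\frac{a}{n}E$), $q_0=1$ (from $g^*S_X=g^{-1}_*S_X+\frac{1}{p}F$), and $b=1$ because $g$ has discrepancy $\frac{1}{p}$. The two required inequalities then read $T(f,g,S_X)=T(f,g)\le 0$, which we just established (even strictly), and $bc_0-aq_0=a-a=0$, which is automatic. Hence $-K_{Y/W}$ is nef. For any irreducible $l\subset S_X\cap E$, $l$ is contracted by $f$ so $l_Y$ is contracted by $f\circ g$, and the projection formula gives $l_Y\cdot K_Y=-l_Y\cdot(-K_{Y/W})\le 0$. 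Corollary \ref{2ray2} then furnishes the factoring diagram $\ddagger$.

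The main technical subtlety will be the irreducibility hypothesis on $D\cap E$ hidden in Proposition \ref{nef}: when $S_X\cap E$ decomposes into several components, one must either run the nefness argument component-wise or replace the auxiliary divisor (for instance by a suitable section through $P$). In either case, the uniqueness of the non-Gorenstein point $Q$ on $E$ is precisely what controls the Cartier index of $-K_X$ along $E$ and produces the matching bound $\frac{1}{p}$ on both sides of the comparison, which is the heart of the argument.
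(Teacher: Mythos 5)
Your derivation of $T(f,g)<0$ is sound and is essentially the idea underlying the paper's argument: since $Q$ is the only non-Gorenstein point of $E$, the divisor $pK_X$ is Cartier in a neighbourhood of $E$, so every component $l_i$ of $S_X\cap E$ satisfies $l_i\cdot K_X\le -\frac{1}{p}$, whence $\frac{a^2}{n^2}E^3=(-K_X)\cdot[S_X\cap E]\ge\frac{1}{p}>\frac{\frak q}{p^3}F^3$. The gap is in the other hypothesis of Corollary \ref{2ray2}. You propose to obtain $l_Y\cdot K_Y\le 0$ by first proving nefness of $-K_{Y/W}$ via Proposition \ref{nef} with $D=S_X$; but Proposition \ref{nef} requires $D\cap E$ to be irreducible, and that is precisely the hypothesis Corollary \ref{2ray3} is designed to dispense with (it is why the paper invokes Corollary \ref{2ray2} rather than Proposition \ref{2ray} in Cases IIa, IIc, IId, IIe, where no control on the components of $S_X\cap E$ is available). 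You flag this as ``the main technical subtlety'' and gesture at a component-wise or alternative-divisor fix, but you do not carry it out, and the whole content of the corollary lives in that step.

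The paper closes it by a direct per-component estimate, which is what your write-up is missing. Write $[S_X\cap E]=\sum c_i l_i$ with $c_i\in\bZ_{>0}$. Since $K_Y=g^*K_X+\frac{1}{p}F$ and $l_{i,Y}\not\subset F$, each difference $l_{i,Y}\cdot K_Y-l_i\cdot K_X=\frac{1}{p}\,l_{i,Y}\cdot F$ is nonnegative; hence each individual difference is bounded above by the sum $\sum_j\bigl(l_{j,Y}\cdot K_Y-l_j\cdot K_X\bigr)$, which is at most $\frac{\frak q}{p^3}F^3$. Combined with your bound $l_i\cdot K_X\le-\frac{1}{p}$ this gives
$$l_{i,Y}\cdot K_Y\;\le\;-\frac{1}{p}+\frac{\frak q}{p^3}F^3\;<\;0$$
for every $i$, which is the (strict form of the) curve hypothesis of Corollary \ref{2ray2}, with no irreducibility needed. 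Without some such argument your proof does not go through when $S_X\cap E$ is reducible, so as written there is a genuine gap.
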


\begin{proof}
Suppose that $[S_X \cap E]=[\sum c_i l_i]$ as $1$-cycle for some
$c_i \in \bZ_{>0}$.
Note that $l_{i,Y} \cdot K_Y \ge l_{i} \cdot K_X$ for all $i$.
Hence for all $i$,
$$ \begin{array}{ll} l_{i,Y} \cdot K_Y & = l_{i} \cdot K_X + (l_{i,Y} \cdot K_Y - l_{i} \cdot K_X )\\
                                      & \le l_{i} \cdot K_X + \sum_i (l_{i,Y} \cdot K_Y - l_{i} \cdot K_X)\\
                                      & \le \frac{-1}{p} + \frac{\frak{q}}{p^3} F^3 <0. \end{array}.$$
By Corollary \ref{2ray2}, there exists a factoring diagram.
\end{proof}

We remark that once there is a factoring diagram, then the induced
map $f^\sharp: X^\sharp \to W$ is a divisorial contraction to $P
\in W$ with exceptional divisor $F_{X^\sharp}$ and discrepancy
$\frak{a}:= \frac{ a \frak q +n}{p} \in \bZ_{>0}$.

We now study the divisorial contraction to a Gorenstein point with
non-minimal discrepancies case by case (cf. Table A).

\noindent {\bf Case Ia.} Suppose that $P \in W $ is nonsingular. \\
By \cite{Kk01},  $f$ is the weighted blowup of weight $(1,m,n)$
with $(m,n)=1$,  $1< m < n$, and the discrepancy is $a=m+n$. 

On $X$,  the highest index point, say $Q$,  is a terminal quotient
singularity of type $\frac{1}{n}(1, m, -1)$. Let $g: Y \to X$ be
the Kawamata blowup, which is the weighted blowup of weights
$\frac{1}{n}(t, 1, n-t)$, where $t$ is the minimal positive
integer satisfying $mt=ns+1$. Clearly $t<n, s<m$.

Pick $D=f^{-1}_* {\rm div}(x_2)$. Then $l=D \cap E$  is clearly
irreducible. Since $c_0=m, q_0=1$ and $\frak q=n-t$, one has $$
T(f,g,D) = -\frac{m+n}{n} + \frac{1}{nt} <0.$$ Hence we have the
factoring diagram by Proposition \ref{2ray}. By Theorem 2.7 of
\cite{Pisa}, one sees that both $f^\sharp, g^\sharp$ are weighted
blowups. The factoring diagram indeed fits into the following
diagram.

$$\begin{CD}
Y @>{\dashrightarrow}>> Y^\sharp \\
@V{\frac{1}{n}}V{wt=w_2}V @V{{s+t}}V{wt=w'_2}V \\
Q_3 \in X @.  X^\sharp \ni Q^\sharp_1 \\
@V{m+n}V{wt=w_1  }V @V{{m+n-s-t}}V{wt=w'_1}V\\
W @>=>> W
\end{CD}$$
where
$$ \begin{array}{ll} w_1=(1,m,n), & w'_1=(1, m-s,n-t), \\
  w_2=\frac{1}{n}(t, 1, n-t),  & w'_2=(1, s, t). \end{array}$$

\noindent
{\bf Case Ib.} This contraction is described in \cite[Theorem 1.2.i]{Kk01}. In fact, the factoring diagram 
is described in \cite[Subsection 3.5]{Pisa} with $n=1$.
We give a brief review for reader's convenience. The equation of
$P \in W$ is given by $$ \varphi: x_1x_2 +g(x_3,x_4)=0 \subset
\bC^4.$$ The map $f$ is given by weighted blowup with weight
$v_1=(r_1,r_2,a,1)$. We may write $r_1+r_2=da$ for some $d>0$ with
the term $x_3^{d} \in \varphi$. Moreover, $(a,r_1)=(a,r_2)=1$.
Hence, there exist $0 < s_i^* < r_i$ and  $0<a_i<a$ so that

$$\left\{ \begin{array}{l}
1+a_1r_1=s_1^* a;\\
1+a_2r_2=s_2^* a.\\
\end{array} \right.
$$

 Note that
$as_2^*=1+a_2 r_2=1+ a_2 (ad-r_1)$. Therefore,
$ a( s_2^* - a_2 d)= 1 -a_2 r_1.$
By $(a,r_1)=1$ and comparing it with $a s_1^*=1+a_1 r_1$, we
have $a_1=-a_2+ ta$ for some $t \in \bZ$. Since $ 0<
a_1+a_2 <2a$, it follows that $a_1+a_2=a$.

 Suppose that $r_1>1$.
 We have the
 following factoring diagram.
$$\begin{CD}
Y @>{\dashrightarrow}>> Y^\sharp \\
@V{\frac{1}{r_1}}V{wt=w_2}V @V{{a_1}}V{wt=w'_2}V \\
Q_1 \in X @.  X^\sharp \ni Q^\sharp_4 \\
@V{a}V{wt=w_1  }V @V{{a_2}}V{wt=w'_1}V\\
W @>=>> W
\end{CD}$$
where
$$ \begin{array}{ll} w_1=(r_1,r_2,a,1), & w'_1=(r_1-s_1^*,r_2-a_1d+s_1^*,a_2,1) \\
  w_2=\frac{1}{r_1}(r_1-s_1^*,d,1,s_1^*),  & w'_2=(s_1^*, a_1 d-s_1^*,a_1,1). \end{array}$$


 Suppose that $r_2>1$.
We have the
 following factoring diagram.
$$\begin{CD}
Y @>{\dashrightarrow}>> Y^\sharp \\
@V{\frac{1}{r_2}}V{wt=w_2}V @V{{a_2}}V{wt=w'_2}V \\
Q_2 \in X @.  X^\sharp \ni Q^\sharp_4 \\
@V{{a}}V{wt=w_1  }V @V{{a_1}}V{wt=w'_1}V\\
W @>=>> W
\end{CD}$$
where
$$ \begin{array}{ll} w_1=(r_1,r_2,a,1), & w'_1=(r_1+s_2^*-a_2d,r_2-s_2^*,a_1,1) \\
  w_2=\frac{1}{r_2}(d, r_2-s_2^*,1,s_2^*),  & w'_2=(a_2 d-s_2^*,s_2^*, a_2,1). \end{array}$$

\noindent {\bf Case Ic.} 
This contraction is described in \cite[Theorem 1.2.ii.a]{Kk01} and
the discussion is parallel to the that in \cite[Subsection
3.2]{Pisa}. The local equation of $P \in W$ is given by
$$(\varphi:x_1^2+x_2^2 x_4+x_1q(x_3^2,x_4)+\lambda x_2x_3^{2}+\mu x_3^3+p(x_2,x_3,x_4)=0) \subset \bC^4,$$
$f$ is the weighted blowup with
weights $v_1=(r+1,r,a,1)$, $2r+1=ad$ and both $a,d$
are odd. Notice that $wt_{v_1}(\varphi)=2r+1$ and  we have that $x_3^{d} \in p(x_2,x_3,x_4)$ otherwise $Q_3 \in X $ is singular of index $a$.

There are two quotient singularities $Q_1,Q_2$ of index $r+1,r$
respectively.
We  take $g: Y \to X$  the   weighted blowup with weights $w_2=\frac{1}{r}(d,r-d,1,d)$  over $Q_2$.
 Then
$$E^3=\frac{2r+1}{ar(r+1)}, \quad F^3= \frac{r^2}{d(r-d)}, \quad \frak {q}=r-d,\quad \frak {a}=a-2.$$

In this case,  we pick  $S=f^{-1}_* \text{div}({x}_3) \in
|-K_X|$, then $S \cap E$ is irreducible.
Now $$T(f,g)=\frac{1}{r}(-\frac{a(2r+1)}{r+1}+\frac{1}{d}) <0.$$
Therefore there exists a factoring diagram  by
Proposition \ref{2ray}.

$$\begin{CD}
Y @>{\dashrightarrow}>> Y^\sharp \\
@V{\frac{1}{r}}V{wt=w_2}V @V{2}V{wt=w'_2}V \\
Q_2 \in X @.  X^\sharp \ni Q^\sharp_4 \\
@V{{a}}V{wt=w_1  }V @V{a-2}V{wt=w'_1}V\\
W @>=>> W
\end{CD}$$

where $$ \begin{array}{ll} w_1=v_1=(r+1,r,a,1), & w'_1=v_2=(r+1-d,r-d,a-2,1), \\
  w_2=\frac{1}{r}(d,r-d,1,d),  & w'_2=(d,d,2,1). \end{array}$$

\noindent {\bf Case Id.} In the case (1.2.ii.b), the local equation of $P \in W$ is given
by
$$(P \in W) \cong  o \in \left( \begin{array}{l}
\varphi_1: x_1^2+x_2x_5+p(x_2,x_3,x_4)=0 \\
\varphi_2: x_2 x_4 +x_3^{d} + q(x_3,x_4) x_4 + x_5 =0
\end{array} \right) \subset \bC^5,$$  $f$  is a weighted blowup
with weights $v_1=(r+1,r,a,1,r+2)$, and  $r+1=ad$.

There are quotient singularities $Q_2,Q_5$ of index $r,r+2$
respectively.
 We  take $g: Y \to X$  the  weighted
blowup with weights $w_2=\frac{1}{r+2}(d, 2d, 1, r-d+2, d)$ over
$Q_5$. Then
$$E^3 = \frac{2r+2}{ar(r+2)}, \quad F^3=\frac{(r+2)^2}{d(r-d+2)}, \quad \frak q=d, \quad \frak a=1.$$

 We  pick $D=f^{-1}_* \text{div}({x}_2)$. It is
easy to check that $E \cap D$ is  irreducible but non-reduced.
 We have $c_0=r, q_0=2d$, hence $c_0 -a q_0 <0$ and moreover
$$T(f,g,D)=\frac{1}{r+2}(-(2r+2)+\frac{2d}{ r-d+2})<0.$$
Therefore there exists a factoring diagram  by
Proposition \ref{2ray}.

$$\begin{CD}
Y @>{\dashrightarrow}>> Y^\sharp \\
@V{\frac{1}{r+2}}V{wt=w_2}V @V{{a-1}}V{wt=w'_2}V \\
Q_5 \in X @.  X^\sharp \ni Q^\sharp_4 \\
@V{{a}}V{wt=w_1  }V @V{1}V{wt=w'_1}V\\
X @>=>> X
\end{CD}$$

where $$ \begin{array}{l} w_1=v_1=(r+1,r,a,1,r+2),\\
w_2=\frac{1}{r+2}(d,2d,1,r-d+2,d), \\
w'_1=v_2=(d,d,1,1,d), \\
 w'_2=(r-d+1,r-d,2,a-1,1,r-d+2). \end{array}$$

\noindent {\bf Case IIa.} This contraction is described in
\cite[Theorem 1.1.(2)]{Kk02}. The local equation of $P \in W$ is
given by
$$(\varphi:x_1x_2+x_3^2+x_4^3=0) \subset \bC^4,$$
and $f$ is the weighted blowup with weights $v_1=(1,5,3,2)$.

There is a unique singularity $Q_2$ on $E$, which is a  quotient
singularities of index $5$. We take $g: Y \to X$  the weighted
blowup with weights $w_2=\frac{1}{5}(4,1,2,3)$  over $Q_2$.
 Thus $\frak q=1$, $\frak a=1$ and $\frac{\frak q}{5^3} F^3 =\frac{1}{30} < \frac{1}{5}$.
Therefore there exists a factoring diagram  by
Corollary \ref{2ray3}.

$$\begin{CD}
Y @>{\dashrightarrow}>> Y^\sharp \\
@V{\frac{1}{5}}V{wt=w_2}V @V{3}V{wt=w'_2}V \\
Q_2 \in X @.  X^\sharp \ni Q^\sharp_1 \\
@V{4}V{wt=w_1  }V @V{1}V{wt=w'_1}V\\
W @>=>> W
\end{CD}$$

where $$ \begin{array}{ll} w_1=v_1=(1,5,3,2), & w'_1=v_2=(1,1,1,1), \\
  w_2=\frac{1}{5}(4,1,2,3),  & w'_2=(1,4,2,1). \end{array}$$

\noindent{\bf Case IIb.} $f$ is of type e9 with discrepancy $2$.
This case was studied in \cite{GE}. We summarize some results in
\cite{GE}. There are two singularities $Q_1, Q_2$ of type
$\frac{1}{5}(1,1,-1)$ and $\frac{1}{3}(1,1,-1)$ respectively. Pick
any general elephant $S \in |-K_X|$, then $[S \cap E]=2[l]$, where
$l \cong \bP^1$ and $l$ passes through both $Q_1, Q_2$ \cite[Lemma
5.1]{GE}. We may assume that, near $Q_1$, $S={\rm div}(x)$,
$E={\rm div}(y^2)$ (after coordinate change) and $l=(x=y=0)$. Now
$E^3=\frac{1}{15}$ and $l \cdot E=\frac{-1}{15}$.

Let $g: Y \to X$ be the Kawamata blowup over $Q_1$ with weights
$\frac{1}{5}(1,1,4)$. One sees that $\frak q=2$, $\frak a=1$.
Notice that
$$2l_Y \cdot K_Y=2l \cdot K_X +\frac{2}{5^3}F^3 =
\frac{-2}{15}+\frac{2}{20} <0.$$ By Proposition \ref{2ray},
 there exists a factoring diagram.

\begin{diagram}
Y     &   & \rDashto &    &  Y^\sharp   \\
\dTo^{g}_{\frac{1}{5}} &       &      &   &  \dTo_{g^\sharp}      \\
 X       &        &      &  &  X^\sharp  \\
           & \rdTo_{f}^{a=2}  &      & \ldTo_{f^\sharp}^1  &        \\
           &        &  W   &        &
\end{diagram}
where $f^\sharp$ is a divisorial contraction with exceptional
divisor $F_{X^\sharp}$ and discrepancy $\frak a=1$.

\noindent{\bf Case IIc.} $f$ is of type e5 with discrepancy $2$.\\
There is only one singularity $Q \in X$, which is of type
$\frac{1}{7}(1,1,6)$. Let $g: Y \to W$ be the weighted blowup of
weights $\frac{1}{6}(1,1,6)$ over $Q$ and let $\mu: Z \to Y \to X
\ni Q$ be the economic resolution by further weighted blowups.
Clearly,
$$\left\{ \begin{array}{l} K_Z=\mu^* K_X + \sum_{j=1}^6 \frac{j}{7} F_j; \\
\mu^*E=E_Z+ \sum_{j=1}^6 \frac{q_j}{7} F_j, \end{array} \right.$$
for some $q_j$,  where $F_1=F$ is the exceptional divisor of $g$.
Hence
$$K_Z = \mu^* f^*K_W +2E_Z+ \sum_{j=1}^6 a_j  F_{j,Z}$$ with $a_j=\frac{2
q_j+j}{7} \in \bZ$.

Suppose that $E$ is given by $(\phi: \sum c_{\alpha \beta \gamma}
x^\alpha y^\beta z^\gamma=0)  \subset \bC^3/\frac{1}{7}(1,1,6)$
locally around $Q$. Then $$q_j:=\min\{ \alpha j + \beta j + \gamma
(7-j) | x^\alpha y^\beta z^\gamma \in \phi\} \ge \min\{j, 7-j\}.$$

By \cite{Ma}, there must exists a exceptional divisor with
discrepancy $1$ centering at $P\in W$. Since $Z \to W$ is a
Gorenstein partial resolution,  the exceptional with discrepancy
$1$ must appear in $Z$, that is, among $\{F_{j,Z}\}_{j=1,...,6}$.
One can verify that $F_1$ is the only exceptional divisor with
discrepancy $1$ and  $\frak{q}=q_1=3$. Hence $\frac{\frak{q}}{p^3}
F^3 =\frac{1}{14} < \frac{1}{7}$. By Corollary \ref{2ray3}, we
have a factoring diagram so that $f^\sharp: X^\sharp \to W$ is a
divisorial contraction contracting $F_{ X^\sharp}$ with
discrepancy $1$.

\noindent{\bf Case IId.} $f$ is of type e3 with discrepancy $3$.\\
There is only one singularity $Q \in X$, which is of type $cAx/4$
with axial weight $2$. More precisely, $Q \in X$ is given by
$$(\varphi: x^2+y^2+f(z,u)=0) \subset
\bC^4/\frac{1}{4}(1,3,1,2),$$ such that $u^3 \in \varphi$ and
$wt_{\frac{1}{4}(1,2)} f(z,u) = \frac{6}{4}$. By \cite[Theorem
7.4]{HaI}, there is a unique divisorial contraction $g: Y \to X$
over $Q$ with discrepancy $\frac{1}{4}$, which is the weighted
blowup of weights $\frac{1}{4}(5,3,1,2)$. Take economic resolution $\nu: Z \to Y$
over the unique higher index point, which is a quotient singularity of index $5$,
and let $\mu:= g \circ \nu: Z \to X$. Then  we ends up with
$$\left\{\begin{array}{l} K_Z=\mu^* K_X +\frac{1}{4}F+ \sum_{j=1}^4 \frac{b_j}{4} F_j; \\
\mu^*E=E_Z+ \frac{\frak{q}}{4} F+\sum_{j=1}^4 \frac{q_j}{4} F_j,
\end{array} \right.$$
where $F_j$ are  $\nu$-exceptional divisors and  $(b_1,b_2,b_3,b_4)=(2,2,3,4)$. Hence
$$K_Z = (f \circ \mu)^*K_W + \frak a F+\sum_{j=1}^4 a_j F_j,$$
where $\frak a=\frac{1+3\frak q}{4}$ and $a_j=\frac{b_j+3q_j}{4}$.
Since $a_j:= \frac{b_j+3 q_j}{4} >1$ for all $j$, it follows that
$F$ is the only exceptional divisor with discrepancy $1$ over $W$
and hence $\frak{q}=1$ and $\frak a =1$. Thus $\frac{\frak{q}}{p^3} F^3
=\frac{1}{20} < \frac{1}{4}$. By Corollary \ref{2ray3}, we have a
factoring diagram such that $f^\sharp: X^\sharp \to W$ is a
divisorial contraction with exceptional divisor $F_{ X^\sharp}$
and discrepancy $1$.

\noindent{\bf Case IIe.} $f$ is of type e2 with discrepancy $2$.\\
There is a unique higher index point $Q \in X$ of type $cA/r$ or
$cD/3$ with axial weight $2$.

\noindent{\bf Subcase 1.} $Q$ is of type $cD/3$.\\
Let $\mu: Z \to X$ be a common resolutions of $Q$ dominating all
divisorial contractions with minimal discrepancies over $Q$. We
have
$$K_Z =\mu^* K_X+ \sum_{j=1}^N \frac{1}{3}F_j+ \sum \frac{c_l}{3}G_l,$$
where $\{F_j\}_{j=1,...,N}$ is the set all all exceptional
divisors with discrepancy $\frac{1}{3}$ over $Q$ and $c_l \ge 2$.
Suppose that $\mu^*E= E_Z+ \sum\frac{q_j}{3}F_j+\sum
\frac{t_l}{3}G_l$, then
$$K_X =\mu^*f^* K_W + 2E_Z+ \sum_{j=1}^N a_j F_j +\sum b_l G_l,$$
where $a_j= \frac{2q_j+1}{3}$ and $b_l=\frac{2t_l+c_l}{3}>1$.
Since there exists an exceptional divisor with discrepancy $1$
over $P \in W$, we may assume that $a_1=1$.

By \cite[Section 9]{Ha1}, a $cD/3$ point can be classified as
$cD/$3-1, $cD/$3-2 and $cD/$3-3. Unless $Q \in X$ is  of type
$cD/$3-3 and Equation $*$ holds (cf. \cite[p.549]{Ha1}),
 we know that any
exceptional divisor with minimal discrepancy $\frac{1}{3}$ over a
$cD/3$ point is obtained by a divisorial contraction. Hence there
is a divisorial contraction $g: Y \to X$ with exceptional divisor
$F=F_1$ and discrepancy $\frac{1}{3}$. We thus have $\frak{q}=1$ and $\frak a =1$.

It is also straightforward to check that $\frac{\frak q}{3^3}F^3 =
\frac{1}{12}$ for any such divisorial contraction  with
discrepancy $\frac{1}{3}$.  By Corollary \ref{2ray3}, we have a
factoring diagram such that  $f^\sharp: X^\sharp \to W$ is a
divisorial contraction with exceptional divisor $F_{ X^\sharp}$
and discrepancy $1$.

In the remaining situation that $Q \in X$ is of type $cD/$3-3 and
Equation $*$ holds (cf. \cite[p.549]{Ha1}), then there is only one
divisorial contraction $g: Y \to X$, which is a weighted blowup
with weights $v_2=\frac{1}{3}(5,4,1,6)$. There is another
valuation with discrepancy $\frac{1}{3}$ given by the weighted
blowup with weights $v_1=\frac{1}{4}(2,4,1,3)$. We write
$K_Z=\mu^* K_X +\frac{1}{3}F_1+ \frac{1}{3}F_2+\sum
\frac{c_l}{3}G_l$, and
$$K_Z=\mu^*f^*K_W +2E_Z+ a_1F_1+a_2 F_2 + \sum b_l G_l,$$
where $F_i$ corresponds to the valuation with weights $v_i$ for
$i=1,2$.

  Let $(\phi=0) \subset \bC^3/\frac{1}{3}(2,1,1,0)$ be  the local equation of $E$ near $Q$. Since $a_1=1$, then $q_1=1$ and  $\frac{q_1}{3}=wt_{v_1}(\phi)=\frac{1}{3}$. One sees that $\phi$ contains $z$. It follows that $\frac{q_2}{3}=wt_{v_2}(\phi)=\frac{1}{3}$ and hence $\frak q=1$ and $\frak a=1$ holds.

Now we have $\frac{\frak q}{3^3}F^3 = \frac{1}{10}$.  By Corollary
\ref{2ray3} again, we have a factoring diagram such that
$f^\sharp: X^\sharp \to W$ is a divisorial contraction with
exceptional divisor $F_{ X^\sharp}$ and discrepancy $1$.

\noindent{\bf Subcase 2.} $Q$ is of type $cA/r$.\\
After coordinate changes, we may assume that local equation near
$Q$ is given by $(\varphi: xy+z^{tr}+u^2=0) \subset
\bC^4/\frac{1}{r}(1,-1,2,r)$ for some $t \ge 2$. Set $r=2k+1$. Let $Y
\to X$ be the weighted blowup with weights
$v_1:=\frac{1}{2k+1}(k+1,3k+1,1, 2k+1)$ with exceptional divisor $F$.
There are quotient singularities $R_1,R_2$ of index $k+1, 3k+1$.
Let $Z \to Y$ be the economic resolution of $R_1, R_2$.
Then we have
$$\begin{array}{ll} K_Z= & \mu^* K_X+ \frac{1}{2k+1}F + \sum_{j=1}^k \frac{2j}{2k+1} F_j + \\
  &\sum_{i=1}^{k}( \frac{2i+1}{2k+1} G_{0i}+ \frac{2i}{2k+1}G_{1i}+\frac{2i-1}{2k+1}G_{2i}). \end{array}$$
More explicitly, the resolution over $R_1$ is obtained by weighted
blowups of weights $\frac{1}{k+1}(j,2k+2-2j, j, k+1-j)$ for $1 \le
j \le k$. Over $Q$ these weights corresponds to vectors
$\frac{1}{2k+1}(j, 4k+2-j, 2j, 2k+1)$. Similarly, the resolution
over $R_2$ is obtained by weighted blowups of weights
$\frac{1}{3k+1}(2i, 3k+1-i, 3i, i), \frac{1}{3k+1}(2k+2i, 2k+1-i,
3i-1, k+i)$, and $\frac{1}{3k+1}(4k+2i, k+1-i, 3i-2, 2k+i)$ for $1
\le i \le k$. Over $Q$, these weights corresponds to vectors
$$\left\{ \begin{array}{l}\frac{1}{2k+1}(k+1+i,3k+1-i, 2i+1 , 2k+1), \\
\frac{1}{2k+1}(2k+1+i,2k+1-i, 2i , 2k+1),\\
\frac{1}{2k+1}(3k+1+i,k+1-i, 2i-1 , 2k+1). \end{array} \right.$$
for $1 \le i \le k$ respectively.

Suppose that $E$ is given by $(\phi: \sum c_{\alpha \beta \gamma
\delta }x^\alpha y^\beta z^\gamma u^\delta=0)  \subset
\bC^4/\frac{1}{r}(1,-1,2, r)$ locally around $Q$. We write
$\mu^*E= E_Z+ \frac{\frak q}{2k+1} F+ \sum_{j=1}^k
\frac{q_j}{2k+1} F_j + \sum_{i=1}^k (\frac{t_{0i}}{2k+1}
G_{0i}+\frac{t_{1i}}{2k+1} G_{1i}+\frac{t_{2i}}{2k+1} G_{2i})$ and
hence
$$K_Z=\mu^*f^*K_W+2E_Z+\frak a F+ \sum_{j=1}^k a_j F_j + \sum_{i=1}^k (b_{0i} G_{0i}+b_{1i} G_{1i}+b_{2i} G_{2i}),$$
with $\frak a:=\frac{2 \frak q+1}{2k+1}$, $a_j:=\frac{2
q_j+2j}{2k+1}$ , $b_{0i}:=\frac{2 t_{0i} +2i+1}{2k+1}$,
$b_{1i}:=\frac{2 t_{1i} +2i}{2k+1}$, $b_{2i}:=\frac{2 t_{2i}
+2i-1}{2k+1}$. There exists an exceptional divisor with
discrepancy $1$. Hence either $\frak a$, $b_{0i}$ or $b_{2i}=1$
for some $i$ because $a_j$ and $b_{1i}$ are even.

\noindent {\bf Claim.} $\frak a=1$.\\
Suppose that $b_{0i}=1$ for some $i$. Then $t_{0i}=k-i$. Since
$$t_{01}=\min \{ \alpha (k+1+i) + \beta (3k+1-i) + \gamma (2i+1)
+\delta (2k+1)| x^\alpha y^\beta z^\gamma u^\delta \in \phi\}.$$
It follows that $\phi$ contains  $z^\gamma$ with $\gamma (2i+1) =
k-i$. Hence $$\frac{\frak q}{2k+1} =wt_{v_1} \phi \le
\frac{k-i}{2k+1} \le \frac{k-1}{2k+1}$$ and $\frak a<1$, a
contradiction.

Suppose that $b_{2i}=1$ for some $i$. Then similarly, one sees
that $\phi$ contains $z^\gamma$ with $\gamma(2i-1)=k-i+1$. This
leads to the same contradiction unless $b_{21}=1$ and $\phi$
contains $z^k$. It follows that $\frak q=k$ and $\frak a=1$ in
this situation.

Now $\frac{\frak q }{(2k+1)^3} F^3 = \frac{2k}{(k+1)(3k+1)(2k+1)}
< \frac{1}{2k+1}$. By Corollary \ref{2ray3}, there is a factoring
diagram such that $f^\sharp$ is a divisorial contraction with
discrepancy $\frak a=1$.

\noindent{\bf Case IIf.} $f$ is of type e1 with discrepancy $2$.\\
In this case, there is a unique higher point $Q$ of type
$\frac{1}{r}(1,-1,4)$.

\noindent{\bf Subcase 1.} $r=4k+3$.\\
 Let $Y \to X$ be the Kawamata blowup along $Q$ with weights $\frac{1}{4k+3}(k+1, 3k+2, 1)$.
 Suppose that  the local equation of $E$ near $Q$ is given by $(\phi: \sum c_{\alpha \beta \gamma}x^\alpha y^\beta z^\gamma=0)$.
Let $\mu: Z \to X$ be the economic resolution over $Q$, which factors through $Y$. Then we have
$$\left\{\begin{array}{l} K_Z=\mu^*K_X+ \sum_{j=1}^{4k+2} \frac{j}{4k+3}F_j; \\ \mu^*E= E_Z +\sum_{j=1}^{4k+2} \frac{q_j}{4k+3} F_j, \end{array} \right.$$
where $F_1=F$ and  $$q_j:=\min\{ \alpha \overline{(k+1)j}+\beta
\overline{(3k+2)j} + \gamma j |x^\alpha y^\beta z^\gamma \in \phi
\}.$$ We have $K_Z=g^*f^*K_W + 2E_Z+ \sum_{j=1}^{4k+2} a_j F_j$ with $a_j
= \frac{2q_j+j}{4k+3} \in \bZ$. Note that $a_j \equiv j
\ (\text{mod } 2)$ and $a_j=1$ for some $j$.

\noindent{\bf Claim}. $a_1 \le 3$.\\
Suppose on the contrary that $a_1 \ge 5$. For all monomial
$x^\alpha y^\beta z^\gamma \in \phi$, we have $$ q_1=\alpha (k+1)
+ \beta (3k+2)+\gamma \ge 10k+7. \eqno{\dagger}$$ If $a_j=1$ for
some $j$, then

$$q_{j}=\left\{ \begin{array}{lll}2k-2s+1 &=(k+s+1)\alpha+(3k-s+2)\beta+(4s+1)\gamma, & \text{if } j=4s+1;\\
                             2k-2s&=(3k+s+3)\alpha+(k-s)\beta+(4s+3)\gamma, & \text{if } j=4s+3,   \end{array} \right.$$ for some $x^\alpha y^\beta z^\gamma \in \phi$, which is a contradiction to $\dagger$. This completes the proof of the Claim.

Notice  that if $a_1=3$, i.e. $q_1=6k+4$, then $y^2 \in \phi$ and $a_j=1$ if and only if $j=4s+3$ with $s<k$. In this case, there are exactly $k-1$ exceptional divisors with discrepancy $1$. Hence $k \ge 2$ in this situation.
Also, if $a_1=1$, then $q_1=2k+1$.
Thus in any event, $$\frac{\frak q}{(4k+3)^3}F^3 = \frac{2 \frak q}{(k+1)(3k+2)(4k+3)} \le \frac{4}{3(4k+3)}.$$
For any $l \subset S \cap E$, one has $l \cdot E \ge \frac{1}{4k+3}$ and hence
$l \cdot K_X \le \frac{-2}{4k+3}$. Therefore,
 $l_{Y} \cdot K_Y < 0$ for all $i$. Hence there exists a factoring diagram by Corollary \ref{2ray2}. The resulting divisorial contraction $f^\sharp: X^\sharp \to W$ is a divisorial contraction with discrepancy $1$ or $3$.

\noindent{\bf Subcase 2.} $r=4k+1$.\\
Similarly, let $Y \to X$ be the Kawamata blowup along $Q$ with weights
$\frac{1}{4k+1}(3k+1, k, 1)$ and
 $\mu: Z \to X$ be the economic resolution over $Q$,
which factors through $Y$.

Thus we have $K_Z=g^*f^*K_W + 2E_Z+ \sum_{j=1}^{4k} a_j F_j$ with $a_j = \frac{2q_j+j}{4k+1} \in \bZ$ and $$q_j:=\min\{ \alpha \overline{(3k+1)j}+\beta \overline{kj} + \gamma j |x^\alpha y^\beta z^\gamma \in \phi \}.$$
Note that $a_j \equiv j \ (\text{mod } 2)$ and $a_j=1$ for some $j$.

\noindent {\bf Claim}. $a_1 =1$.\\
Suppose on the other hand that $a_1 \ge 3$.
For all monomial $x^\alpha y^\beta z^\gamma \in \phi$, we have $$ q_1=\alpha (3k+1) + \beta k+\gamma \ge 6k+1. \eqno{\dagger}$$
Suppose that  $a_j=1$, it is straightforward to see that

$$q_{j}=\left\{ \begin{array}{lll}2k-2s+1&=(k+s)\alpha+(3k-s+1)\beta+(4s-1)\gamma, & \text{if } j=4s-1;\\
                             2k-2s &=(3k+s+1)\alpha+(k-s)\beta+(4s+1)\gamma, & \text{if } j=4s+1,   \end{array} \right.$$ for some $x^\alpha y^\beta z^\gamma \in \phi$, which is a contradiction to $\dagger$. The Claim now follows.

Now $\frak a=a_1=1$, $\frak {q} =2k$ and thus $$\frac{\frak q}{(4k+1)^3}F^3 = \frac{4}{(3k+1)(4k+1)} \le \frac{1}{4k+1}.$$
For any $l \subset S \cap E$, one has $l \cdot E \ge \frac{1}{4k+1}$ and hence
$l \cdot K_X \le \frac{-2}{4k+1}$. Therefore,
 $l_{Y} \cdot K_Y < 0$ for all $i$. Hence there exists a factoring diagram by Corollary \ref{2ray2}. The resulting map $f^\sharp: X^\sharp \to W$ is a divisorial contraction with discrepancy $1$.

\noindent{\bf Case IIg.} $f$ is of type e1 with discrepancy $4$.\\
In this case, there is a unique higher index point $Q$ of type
$\frac{1}{r}(1,-1,8)$. One can work out this case similar to Case
IIf.

\noindent{\bf Subcase 1.} $r=8k+7$.\\
Let $Y \to X$ be the Kawamata blowup along $Q$ with weights
$\frac{1}{8k+7}(k+1, 7k+6, 1)$ and $\mu: Z \to X$ be the economic resolution
over $Q$, which factors through $Y$. Suppose that  the local equation
of $E$ near $Q$ is given by $(\phi: \sum c_{\alpha \beta \gamma} x^\alpha y^\beta
z^\gamma=0)$.
Thus we have $K_Z=\mu^*f^*K_W + 4E_Z+ \sum_{j=1}^{8k+6} a_j F_j$ with
$a_j = \frac{4q_j+j}{8k+7} \in \bZ$ and $$q_j:=\min\{ \alpha \overline{(k+1)j}+\beta
\overline{(7k+6)j} + \gamma j |x^\alpha y^\beta z^\gamma \in \phi
\}.$$ Note that $a_j \equiv -j
\ (\text{mod } 4)$ and $a_j=1$ for some $j$.

\noindent{\bf Claim}. $a_1=3$ or $7$. \footnote{if $a_1=7$,  then $y^2 \in \phi$ and $a_j=1$ if and only if $j=8s+3$ with $s<k$. In this case, there are exactly $k-1$ exceptional divisors with discrepancy $1$.}\\
Suppose on the contrary that $a_1 \ge 11$. For all monomial
$x^\alpha y^\beta z^\gamma \in \phi$, we have $$ q_1 \ge \alpha
(k+1) + \beta (7k+6) +\gamma \ge 22k+19. \eqno{\dagger}$$ Suppose that 
$a_j=1$, it is straightforward to see that

$$q_{j}=\left\{ \begin{array}{lll} 
                             2k-2s+1&=(3k+s+3)\alpha+(5k-s+4)\beta+(8s+3)\gamma, & \text{if } j=8s+3;\\
                             2k-2s&=(7k+s+1)\alpha+(k-s)\beta+(8s+7)\gamma, & \text{if } j=8s+7,\\
                              \end{array} \right.$$ for some $x^\alpha y^\beta z^\gamma \in \phi$, which is a contradiction to $\dagger$. The Claim now follows.

Now $\frak {q} \le 14k+12$ and thus $$\frac{\frak q}{(8k+7)^3}F^3 = \frac{2 \frak q}{(k+1)(7k+6)(8k+7)} \le \frac{4}{(k+1)(8k+7)}.$$
For any $l_i \subset S \cap E$, one has $l_i \cdot E \ge \frac{1}{8k+7}$ and hence
$l_i \cdot K_X \le \frac{-4}{8k+7}$. Therefore,
 $l_{i,Y} \cdot K_Y \le 0$ for all $i$ and strictly $<0$ for some $i$. Hence there exists a factoring diagram by Proposition \ref{2ray2}. The resulting map $f^\sharp: X^\sharp \to W$ is a divisorial contraction with discrepancy $3$ or $7$.

\noindent{\bf Subcase 2.} $r=8k+5$.\footnote{if $a_1=5$,  then $y^2 \in \phi$ and $a_j=1$ if and only if $j=8s+5$ with $s<k$. In this case, there are exactly $k-1$ exceptional divisors with discrepancy $1$.}\\
Similar argument shows that $a_1=1$ or $5$ ( since $a_1 \equiv 1 \ (\text{mod } 4)$) and there exists a factoring diagram by Corollary \ref{2ray2}. The resulting map $f^\sharp: X^\sharp \to W$ is a divisorial contraction with discrepancy $1$ or $5$.

\noindent{\bf Subcase 3.} $r=8k+3$.\\
Similar argument shows that $a_1=3$ ( since $a_1 \equiv -1 \ (\text{mod } 4)$) and there exists a factoring diagram by Proposition \ref{2ray2}. The resulting map $f^\sharp: X^\sharp \to W$ is a divisorial contraction with discrepancy $3$.

\noindent{\bf Subcase 4.} $r=8k+1$.\\
Similar argument shows that $a_1=1$ ( since $a_1 \equiv 1 \ (\text{mod } 4)$) and there exists a factoring diagram by Proposition \ref{2ray2}. The resulting map $f^\sharp: X^\sharp \to W$ is a divisorial contraction with discrepancy $3$.

\section{proof of the main theorem}
\begin{proof}
We prove by induction on depth and discrepancies.

{\bf 1.} Suppose first that $\dep(X)=0$, that is, $X$ has at worst Gorenstein terminal singularities.
By the classification of Mori and Cutkosky \cite{Mo82, Cu}, $f$ can not be a flipping contraction.

If $f: X \to W$ is a divisorial contraction to a point
then $f$  is a divisorial contraction with minimal discrepancy (cf. \cite{Mo82, Cu}).

If $f: X \to W$ be a divisorial contraction to a curve, then $f$ is a blowup along a lci curve in a smooth neighborhood by the
classification of Mori and Cutkosky again.
By Proposition \ref{lci}, $f$ is factorizable.

{\bf 2.} Let $f: X \to W$ be a divisorial contraction to a curve
$\Gamma$ with $\dep(X)=d>0$.  By \cite{CH}, there is a factoring
diagram
\begin{diagram}
Y     &   & \rDashto &    &  Y^\sharp   \\
\dTo^{g} &       &      &   &  \dTo_{g^\sharp}      \\
 X       &        &      &  &  X^\sharp  \\
           & \rdTo_{f}  &      & \ldTo_{f^\sharp}  &        \\
           &        &  W   &        &
\end{diagram}

satisfying:
\begin{enumerate}
\item $Y \to X$ is a divisorial contraction to a highest index
point of index $r>1$ with discrepancy $\frac{1}{r}$;

 \item $Y \to
Y^\sharp$ is a sequence of flips and flops;

\item $g^\sharp: Y^\sharp \to X^\sharp$ is divisorial contraction to the proper transform of $\Gamma$;

\item $f^\sharp$ is a divisorial contraction to a point.
\end{enumerate}

Note that $\dep(Y)=d-1$, and  $\dep(Y^\sharp) \le \dep(Y)=d-1$.
Therefore by Proposition \ref{depth},
$$\dep(X^\sharp) \le \min(0, \dep(Y^\sharp)-1) <d.$$ It follows
that  $X \to W$ can be factored into $$X \dashrightarrow Y
\dashrightarrow  Y^\sharp \to X^\sharp \to W$$ so that each map is
factorizable by induction on depth.

{\bf 3.} Let $f: X \to W$ be a flipping  contraction.  By \cite{CH},
there is a factoring diagram as above so that $f^\sharp:
X^\sharp=X^+ \to W$ is the flipped contraction. Similarly, each
map of $$X \dashrightarrow Y \dashrightarrow  Y^\sharp \to
X^\sharp=X^+$$ is factorizable by induction on depth.

{\bf 4.} Let $f: X \to W$ be a divisorial contraction to a point
$P\in W$ of index $r$  with $\dep(X)=d$ and discrepancy
$\frac{1}{r}$. Nothing to do.

{\bf 5.} Let $f: X \to W$ be a divisorial contraction to a point
$P\in W$ of index $r>1$ with $\dep(X)=d$ and discrepancy
$\frac{a}{r} > \frac{1}{r}$. By \cite{Pisa}, there is a factoring
diagram satisfying:
\begin{enumerate}
\item $Y \to X$ is a divisorial contraction to a highest index
point of index $r>1$ with discrepancy $\frac{1}{r}$;

\item $Y \to Y^\sharp$ is a sequence of flips and flops;

\item $f^\sharp$ is a divisorial contraction with discrepancy $\frac{a'}{r} < \frac{a}{r}$;

\item $g^\sharp$ is divisorial contraction to a point $Q$ of index $r$ with  discrepancy $\frac{a''}{r} < \frac{a}{r}$ and $a''+a'=a$ if $P\in W$ is not  of type $cE/2$;

\item  $g^\sharp$ is divisorial contraction to a point $Q$ of index $3$ with  discrepancy $\frac{1}{3} $ if $P\in W$ is  of type $cE/2$.
\end{enumerate}

Notice  that $\dep(Y^\sharp) \le \dep(Y) = d-1$ and
$\dep(X^\sharp) \le \dep(Y^\sharp)+1 \le d$. By induction on
depth, both $Y \dashrightarrow Y^\sharp$ and $Y^\sharp \to
X^\sharp$ are factorizable. If $\dep(X^\sharp)< \dep(X)$, then we
are done by induction. If $\dep(X^\sharp)= \dep(X)$, then we may
proceed by induction on $a$ which measures the discrepancy.

{\bf 6.} Let $f: X \to W$ be a divisorial contraction to a point
$P\in W$ of index $1$ with $\dep(X)=d$ and discrepancy $a >1$.

{\bf 6.1} If $P \in W$ is a non-singular point, then by the study
of Case Ia, $f$ is factorizable by induction on $a$.

{\bf 6.2} If $P \in W$ is of type $cA$. By the studies in Case Ib,
IIa, and IId, there exists a factoring diagram such that
$f^\sharp: X^\sharp \to W$ has discrepancy $a_1<a$ (Case Ib) or
$1$ (Case IIa, IId). Moreover $\dep(X^\sharp) \le d$. Therefore,
$f^\sharp$ is factorizable by induction on discrepancy $a$ hence
so is $f: X \to W$ because $Y \dashrightarrow Y^\sharp \to X^\sharp$
having $\dep <d$.

{\bf 6.3} If $P \in W$ is of type $cD$ or $cE$ and the
discrepancy $a$ is odd.\\
This could be Case Ic, Id, IId. There exists a factoring diagram
such that $f^\sharp: X^\sharp \to W$ has discrepancy $a_2<a$ (Case
Ic) or $1$ (Case Id, IId). Similarly $f$ is factorizable by
induction on $a$ and on depth.

{\bf 6.4} If $P \in W$ is of type $cD$ or $cE$ and the discrepancy
$a$ is even. This could be Case  Id, IIb, IIc, IIe, IIf, and IIg.
There exists a factoring diagram such that $f^\sharp: X^\sharp \to
W$ has odd discrepancy $a_1$ (Case IIf, IIg) or $1$ (other cases).
Therefore, $f$ is factorizable by $6.3$ and induction on depth.

\end{proof}


\end{document}